\newcommand{\C}{\mathbb{C}}
\newcommand{\R}{\mathbb{R}}
\newcommand{\E}{\mathcal{E}}
\newcommand{\F}{\mathcal{F}}
\newcommand{\OO}{\mathcal{O}}
\newtheorem{theorem}{Theorem}[section]
\newtheorem{thm}[theorem]{Theorem}
\newtheorem{cor}[theorem]{Corollary}
\newtheorem{lemm}[theorem]{Lemma}
\newtheorem{prop}[theorem]{Proposition}
\newtheorem{claim}[theorem]{Claim}
\newtheorem{warning}[theorem]{Warning}
\DeclareMathOperator {\Ho}{H}
\DeclareMathOperator {\Def}{Def}
\DeclareMathOperator {\FEt}{F\acute{E}t}
\DeclareMathOperator {\Ext}{Ext}
\DeclareMathOperator {\Hom}{Hom}
\DeclareMathOperator {\Spec}{Spec}
\DeclareMathOperator {\Aut}{Aut}
\DeclareMathOperator {\Id}{Id}
\DeclareMathOperator {\At}{At}
\DeclareMathOperator {\Gr}{Gr}
\DeclareMathOperator {\rank}{rank}
\newtheorem{defi}[theorem]{Definition}
\newtheorem{remark}[theorem]{Remark}
\numberwithin{equation}{theorem}
\title []{Unobstructedness of deformations of Calabi--Yau varieties with a line bundle}
\author{Shizhang Li}
\address{Department of Mathematics, Columbia University, MC 4406, 2990 Broadway,
New York, NY, 10027, U.S.}
\email{shanbei@math.columbia.edu}
\author {Xuanyu Pan}
\address{Institute of Mathematics, AMSS, Chinese Academy of Sciences, 55 ZhongGuanCun East Road, Beijing, 100190, China}
\email{pan@amss.ac.cn}
\subjclass{Primary 13D10; Secondary 14C30}
\keywords{Deformation Theory, Hodge theory, Polarized Calabi--Yau Varieties}
\begin{document}
\maketitle
\dedicatory

\begin{abstract}
We generalize the Tian--Todorov Theorem in 
the case of Calabi--Yau varieties equipped with a line bundle.
\end{abstract}
\tableofcontents

\section {Introduction}
Let $X$ be a smooth and proper variety having torsion canonical bundle, and let $L$ be a line bundle on it. The goal of this paper is to prove that the deformations of the pair $(X,L)$ are unobstructed,
thus generalizing a famous result of Tian--Todorov.

 \begin{thm}\cite{TG,TO} \label{Thmtiantod}
 If $X$ is a compact K\"{a}hler manifold with trivial canonical bundle, 
 then the Kuranishi space of deformations of complex structures on $X$ is smooth. In a parallel analogy, 
 if $X$ is projective with an ample class $w$, 
 then the Kuranishi space of deformations of $X$ with $w$ is also smooth.
 \end{thm}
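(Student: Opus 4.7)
The plan is to invoke the $T^1$-lifting criterion of Kawamata and Ran mentioned above: smoothness of the Kuranishi functor reduces to showing that for every surjection of Artin local $\C$-algebras $A'\twoheadrightarrow A$ with square-zero kernel, every deformation $X_A\to \Sp(A)$ of $X$ lifts to $\Sp(A')$. Equivalently, it suffices to prove that $H^1(X_A,T_{X_A/A})$ is a free $A$-module of rank $h^1(X,T_X)$ for every infinitesimal deformation $X_A/A$. The entire game, then, is to establish this freeness.

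The key idea is to exploit the triviality of $\om_X$ to convert tangent cohomology into Hodge cohomology, where freeness is automatic. First I would propagate the trivialization of $\om_X$ to each infinitesimal neighborhood: because $h^0(X,\om_X)=1$ and $\chi(X,\om_X)$ is constant in flat families, cohomology-and-base-change together with upper semicontinuity force $h^0(X_A,\om_{X_A/A})=1$, and a lift $\Om_A$ of a chosen generator $\Om\in H^0(X,\om_X)$ is automatically nowhere vanishing on $X_A$ (nonvanishing on the reduction $X=(X_A)_{\mathrm{red}}$ suffices since $X_A$ and $X$ share their underlying topological space). Contraction with $\Om_A$ then produces an $\OO_{X_A}$-linear isomorphism $T_{X_A/A}\cong\Om^{n-1}_{X_A/A}$. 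Taking $H^1$ reduces the problem to $A$-freeness of $H^1(X_A,\Om^{n-1}_{X_A/A})$, which follows from degeneration of the relative Hodge-to-de Rham spectral sequence together with the K\"ahler deformation invariance of Hodge numbers (Kodaira-Spencer), so that the Hodge sheaves are locally free of constant rank.

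For the polarized statement one would apply the same $T^1$-lifting machinery to the functor of pairs $(X,L)$. The infinitesimal theory is controlled by the Atiyah extension $0\to\OO_X\to\mathcal{D}^1(L)\to T_X\to 0$; the obstruction to extending $L$ over a given extension of $X_A$ is cup product with the Atiyah class $\text{At}(L)\in H^1(X,\Om_X^1)$, landing in $H^2(X,\OO_X)$. Because $\text{At}(L)$ is a Hodge $(1,1)$ class and the $(1,1)$-ness of $c_1(L)$ is precisely what is being preserved when one deforms the pair, the same Hodge-theoretic input used above shows that the kernel of the Atiyah cup-product map, which governs the tangent space to the pair functor, is $A$-free throughout the infinitesimal family; hence the pair functor inherits the $T^1$-lifting property from the absolute one.

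The main obstacle I expect is ensuring genuine $A$-module freeness of $H^1(X_A,\Om^{n-1}_{X_A/A})$ uniformly in the Artin base $A$, rather than merely fiberwise equality of Hodge numbers. Kodaira-Spencer gives constancy of Hodge numbers on fibers, but promoting this to flatness of the relative cohomology requires degeneration of the Fr\"olicher spectral sequence relative to $A$, and choosing the trivializing form $\Om_A$ compatibly across varying $A$ so that the identification $T_{X_A/A}\cong\Om^{n-1}_{X_A/A}$ is genuinely $A$-linear and not merely a fiberwise isomorphism. This uniform control is the technical heart of the Kawamata-Ran approach, and is what allows Tian-Todorov to be viewed as an algebraic consequence of Hodge theory together with canonical triviality.
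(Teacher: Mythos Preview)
The theorem you are proving is quoted in the paper's introduction as a known result of Tian--Todorov; the paper does not give a self-contained proof of it, but it does discuss the algebraic approach and, for the ample polarized statement, proves it as the first case of the main theorem (Theorem~5.5). So let me compare against what the paper actually does.

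For the unpolarized statement your sketch is exactly the argument the paper has in mind: it explicitly says that applying the $T_1$-lifting criterion yields an algebraic proof, referring to \cite{V} for the details. Your outline---trivialize $\omega_{X_A/A}$, identify $T_{X_A/A}\cong\Omega^{n-1}_{X_A/A}$, and invoke degeneration of relative Hodge-to-de~Rham to obtain $A$-freeness of $H^1$---is the standard implementation and is correct.

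For the polarized statement, however, your approach diverges from the paper's and contains a genuine gap. The paper explicitly remarks that ``for the pair $(X,L)$ where $X$ is Calabi--Yau, the $T_1$-lifting criterion does not work out directly.'' Your claim that ``the same Hodge-theoretic input shows that the kernel of the Atiyah cup-product map is $A$-free'' is not justified: the tangent space to the pair functor is $H^1(X_A,\mathcal{E}_{L_A})$, and the Atiyah extension $\mathcal{E}_L$ has no direct description as a Hodge bundle $\Omega^p$, so relative Hodge degeneration says nothing about its cohomology. Freeness of a kernel inside $H^1(T_{X_A/A})$ does not follow from freeness of the ambient module; one would need the cup-product map $\cup\, c_1(L_A)$ to have \emph{constant rank} over $A$, and you have not argued this.

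The paper's route for the ample case is different and avoids $T_1$-lifting for the pair entirely. It first uses the unpolarized result to know that $X$ itself is unobstructed, and then observes (Lemma~2.4) that the obstruction for the pair maps to the obstruction for $X$ under the forgetful map $H^2(X,\mathcal{E}_L)\to H^2(X,T_X)$; hence if the cup-product map
\[
\cup\, c_1(L)\colon H^1(X,T_X)\longrightarrow H^2(X,\mathcal{O}_X)
\]
is surjective, the obstruction for the pair already vanishes. Surjectivity is then deduced from the Hard Lefschetz theorem: using $T_X\cong\Omega^{n-1}_X$, the map embeds in the Lefschetz operator $H^{n-1,0}\xrightarrow{\cup c_1(L)} H^{n-1,1}\xrightarrow{\cup c_1(L)} H^{n,2}$, which is an isomorphism when $L$ is ample. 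This is shorter and uses ampleness in an essential way, whereas your $T_1$-lifting attempt would, if completed, presumably not need ampleness---but as written it is incomplete precisely at the point where the paper says the method breaks down.
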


Namely we prove

\begin{thm}\label{mainthm}
Let $X$ be a smooth and proper variety with torsion canonical bundle and let $L$ be a line bundle on $X$. The deformations of the pair $(X,L)$ are unobstructed.

\end{thm}


One might attempt to solve the problem of the deformational unobstructedness by proving the vanishing of certain obstruction groups.
However, 
for some Calabi--Yau manifold $X$, the obstruction group $\Ho^2(X,T_X)$ can be huge. 
To overcome this difficulty, there are two known methods. The first one is the so called $T_1$-lifting criterion, which was introduced by Kawamata and Ran \cite{KAW, RAN} to provide an alternative algebraic proof of the
Tian--Todorov Theorem for Calabi--Yau varieties.
Moreover, it also applies to
 deformation problems with a hull \cite{AR, V}.
The second method relies on the $dg BV$ algebras, which was introduced by Kontsevich~\cite{Kon}  to establish some generalized Tian--Todorov theorems. 
Nevertheless, it seems that both the aforementioned two methods fail to establish Theorem~\ref{mainthm}.

Now, we present our geometrical approach as follows. Firstly, we use the Beauville--Bogomolov decomposition theorem to reduce Theorem~\ref{mainthm} to the case of abelian varieties with a line bundle. Next, by establishing the infinitesimal variational Hodge conjecture for line bundles \cite[Conjecture 1.4]{defcycle}, the problem reduces to the smoothness of certain Hodge loci in period domains, which can be solved by explicit calculations in linear algebra. Lastly, we remark that Grothendieck, Mumford and Oort studied the unobstructedness problem of the deformations of an abelian variety with an ample line bundle in mixed characteristic \cite[Theorem 2.4.1]{Oort}, while it is still open when the line bundle is not ample. Our theorem for the abelian varieties gives a positive answer to this problem over complex numbers.

\textbf{Acknowledgments.} 
Both authors are very grateful to their advisor Prof.~A.~J.~de Jong.
The second named author also thanks his friend Zhiyu Tian for a lot of inspiration and encouragement.
He also thanks Prof.~M.~Abouzaid, Prof.~M.~Kerr, Prof.~J.~Starr and Prof.~M.~Manetti for discussions and pointing out references.


\section{Sketch of the proof}
Now, let us present the structure of this paper. 

Let $X$ be a smooth and proper variety with torsion canonical bundle, and let $L$ be a line bundle on $X$. We will prove some basic facts concerning the deformation theory of the pair $(X,L)$ in Section \ref{secAO}.

Soon after, some general facts about the compatibility of the obstruction elements with finite \'etale coverings and products are proved in Section \ref{s3} and Section \ref{secobsprod}. Note that, by the Beauville--Bogomolov decomposition theorem, there is a finite \'etale cover $\tilde{X}\rightarrow X$ with $\tilde{X}=A\times Y \times Z$ where $A$ is an abelian variety, $Y$ is a Calabi--Yau manifold with vanishing Hodge numbers and $Z$ is an irreducible holomorphic symplectic manifold. Using the results Proposition \ref{thm1} and Proposition \ref{unobsprod} in these sections, we reduce to prove the theorem for $(A,L_1)$, $(Y,L_2)$ and $(Z,L_3)$ separately. 
The latter two cases are dealt by showing the vanishing of some obstruction elements, see Proposition \ref{unobshk}.
The main difficult part is to show the theorem for the abelian variety $A$ with a line bundle $L_1$.

Let $\Def_A$ (resp.~$\Def_{(A,L)}$) be the deformation functor of $A$ (resp.~the pair $(A,L)$). We study the forgetful functor $\Def_{(A,L)}\rightarrow \Def_A$ which forgets the line bundles, and show the functor and its image are ``formally smooth". One main ingredient of the proof is to show the infinitesimal variational Hodge conjecture for line bundle on a smooth proper formal scheme over rings of power series. 

For the sake of completeness, we recall the infinitesimal variational Hodge conjecture \cite[Conjecture 1.4]{defcycle} here. Let $B$ be a smooth projective scheme over an scheme $S$. Suppose that $S$ is $\Spec(\mathbb{C}[[t]])$. Denote by $B_1$ the scheme $B\times_S \Spec(\mathbb{C}[[t]]/(t))$. There is a Chern character ring homomorphism from the K-group to the de Rham cohomology
\[ ch : K_0(B_1)\rightarrow H^*_{dR}(B_1/\mathbb{C}).\] We denote by $F^r H_{dR}^i\subseteq H_{dR}^i$ the Hodge filtration on the de Rham cohomology. The Gauss-Manin connection $\nabla$ on $H^*_{dR}$ gives a canonical isomorphism
\[ \Phi :H^{*}_{dR}(B/S)^{\nabla=0} \xrightarrow{\cong} H^*_{dR}(B_1/\mathbb{C}).\]

\noindent\textsc{Conjecture.} With the notation above, for an element $\xi\in K_0(B_1)_{\mathbb{Q}}$, if we have \[\Phi^{-1} \circ ch(\xi_1)\in \sum_i H^{2i}_{dR}(B/S)^{\nabla=0}\cap F^i H^{2i}_{dR}(B/S),\] 
then there is an element  $\xi\in K_0(B)_{\mathbb{Q}}$ such that \[ ch(\xi|_{B_1})=ch(\xi_1)\in H^*_{dR}(B_1).\]\\

In Section \ref{secinfhodgeconj}, we will show this conjecture for $\xi_1$ a line bundle on a smooth proper formal scheme over any rings of power series (not just $\Spec(\mathbb{C}[[t]])$), see Theorem \ref{thmlifting} and Remark \ref{rmkdefcycle}. And we conclude that the image of the forget functor $\Def_{(A,L)}\rightarrow \Def_A$ is the ``germ'' of the Hodge locus in the local deformation space of $A$ in Section \ref{secav}. We also show that both the Hodge locus and the forget functor are ``smooth" by some calculations of period domains, see Theorem \ref{linear algebra smooth} and Proposition \ref{deformation surjection}. In summary, we prove the deformations of an abelian variety along with a line bundle are unobstructed, see Theorem \ref{abelian variety unobstructed}. Finally, we will prove our main Theorem~\ref{mainthm} in Section \ref{secmainthm}. 


\section{Atiyah extensions and obstructions} \label{secAO}

Let $X$ be a non-singular variety over the complex numbers. We have a morphism (see~\cite[Chapter III, Exercise 7.4 (c)]{HA})
\[d(log): \OO^*_{X}\rightarrow \Omega_{X}^1\]
by the rule $u\mapsto du/u$. It induces a group homorphism\[c_1:\Ho^1(X,\OO_X^*)\rightarrow \Ho^1(X,\Omega^1_X)=\Ext_{\OO_X}^1(T_X,\OO_X).\]
For a line bundle $L$ on $X$, we associate with $c_1(L)$ an extension class
\begin{equation} \label{atiyah}
0\rightarrow \OO_X\rightarrow \mathcal{E}_L\rightarrow T_X\rightarrow 0.
\end{equation}
This class is the {\it Atiyah extension} of $L$.

Let $U=\{U_{a}\}$ be an affine open covering of $X$ such that $L$ is represented by a system of transition functions $\{f_{ab}\}$ with $f_{ab}\in \Gamma(U_{ab},\OO_X^*)$. Then the Atiyah extension class of $L$ is represented by the $1$-cocycle
\[\bigl(\frac{df_{ab}}{f_{ab}}\bigr)\in Z^1(U,\Omega^1_X).\]
Locally, the sheaf $\mathcal{E}_{L}|_{U_a}$ is isomorphic to $\OO_{U_a}\oplus T_{X}|_{U_a}$. On $U_{ab}$, we identify
a section $(g_a,d_a)$ of $\OO_{U_a}\oplus T_{X}|_{U_a}$ with a section $(g_b,d_b)$ of $\OO_{U_b}\oplus T|_{X}|_{U_b}$ if and only if
\begin{center}
$d_a=d_b$ \qquad and \qquad $g_b-g_a=\frac{d_a f_{ab}}{f_{ab}}$.
\end{center}

We formulate our deformation problem in the following:
\begin{defi}
An infinitesimal deformation of the pair $(X,L)$ over a $\mathbb{C}$-local artinian ring $A$ consists of a pair $(X_A,L_A)$ such that \[\xymatrix{X\ar[r]\ar@{}[dr]|-{\Box} \ar[d] & X_A\ar[d]\\
\Spec(\mathbb{C})\ar[r] & \Spec(A)}
\]
is a cartesian diagram in which $X_A$ is flat over $\Spec(A)$ and $L_A$ is a line bundle on $X_A$ with $L_A|_X=L$.
\end{defi}
There is an obvious equivalence relation among the deformations of the pair $(X,L)$ {given by isomorphisms}. Hence we can define a functor as follows.
\begin{defi} \label{defunobs}
A functor of infinitesimal deformations of $(X,L)$ is the functor
\[\Def_{(X,L)}: \{\mathbb{C}\textit{-local Artinian rings}\} \rightarrow Sets\]
which associates with a $\mathbb{C}$-local Artinian ring $A$ the set
\begin{center}
$\Def_{(X,L)}(A)=$ \{deformations of $(X,L)$ over $A$\}/ $\sim$
\end{center}
We say that $(X,L)$ is unobstructed if for any surjection $A'\rightarrow A$ of $\mathbb{C}$-local Artinian rings, the induced
restriction map $$\Def_{(X,L)}(A')\rightarrow \Def_{(X,L)}(A)$$ is also a surjection.
\end{defi}

We summarize some useful results from~\cite{ES} in the following proposition.
\begin{prop}
The short exact sequence~\ref{atiyah} induces a long exact sequence
\[\xymatrix{\ldots\ar[r] & \Ho^1(X,T_X)\ar[r]^{\cdot c_1(L)} & \Ho^2(X,\OO_X)\ar[r] & \Ho^2(X,\mathcal{E}_L) \ar[r]^{\textrm{forget}}& \Ho^2(X,T_X)\ar[r] &\ldots}.\]
Moreover, for a small extension $0\rightarrow \mathbb{C}\rightarrow A'\rightarrow A\rightarrow 0$
and a deformation $(X_A,L_A)$ of the pair $(X,L)$ over $A$, there exists an obstruction element \[Obs(X_A,L_A)\in \Ho^2(X,\mathcal{E}_L).\] We also have that
\[{\rm forget}[Obs(X_A,L_A)]= Obs(X_A).\]
In particular, the pair $(X,L)$ is unobstructed if and only if the obstruction element $Obs(X_A,L_A)$ is zero for all the small extensions of $A$.
\qed
\end{prop}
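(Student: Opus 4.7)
The plan is to treat the two assertions separately. For the long exact sequence, one simply applies sheaf cohomology to the Atiyah extension~(\ref{atiyah}); the only nontrivial point is to identify the connecting homomorphism $H^1(X,T_X)\to H^2(X,\OO_X)$ with cup product by $c_1(L)$. By construction, $E_L$ is the extension of $T_X$ by $\OO_X$ whose Yoneda class in $\mathrm{Ext}^1_{\OO_X}(T_X,\OO_X)\cong H^1(X,\Omega_X^1)$ equals $c_1(L)$, and it is a standard fact that the boundary map associated with a Yoneda extension agrees (up to sign) with Yoneda product by that extension class, i.e.\ here with $\cup\, c_1(L)$.

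For the obstruction element, I would use the \v{C}ech description of $E_L$ recorded in the excerpt. Fix an affine cover $U=\{U_a\}$ on which $L$ is trivialized by $f_{ab}$. The deformation $(X_A,L_A)$ is then presented by affine opens $U_a^A$ glued by isomorphisms $\phi_{ab}^A$, together with transition functions $f_{ab}^A\in\Gamma(U_{ab}^A,\OO^*)$. Because each $U_a^A$ is smooth affine over $A$ and $\ker(A'\to A)$ is nilpotent, one can choose (non-canonically) a lift $U_a^{A'}$ over $A'$, a lift $\phi_{ab}^{A'}$ of the gluing on the overlap, and a lift $\tilde f_{ab}\in\Gamma(U_{ab}^{A'},\OO^*)$ of $f_{ab}^A$.

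The triple product $\phi_{ab}^{A'}\phi_{bc}^{A'}(\phi_{ac}^{A'})^{-1}$ reduces to the identity on $U_{abc}^A$, hence differs from the identity by a derivation $\theta_{abc}\in\Gamma(U_{abc},T_X)$; similarly the $\phi$-twisted multiplicative cocycle failure of the $\tilde f_{ab}$ differs from $1$ by a section $\sigma_{abc}\in\Gamma(U_{abc},\OO_X)$. The gluing rule for $E_L$ recalled in the excerpt, namely $g_b-g_a=(d_a f_{ab})/f_{ab}$, is precisely what is needed to check that the pair $(\theta_{abc},\sigma_{abc})$ patches into a \v{C}ech $2$-cochain with values in $E_L$; a direct computation on quadruple intersections then shows it is a cocycle. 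One verifies by a standard argument that its class $Obs(X_A,L_A)\in H^2(X,E_L)$ is independent of all chosen lifts and vanishes exactly when $(X_A,L_A)$ admits a lift to $A'$. The image under ``forget'' is by construction the cocycle $(\theta_{abc})$, i.e.\ the usual obstruction $Obs(X_A)$ to lifting the variety alone.

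The main obstacle is the mixing between the two types of lifts: when $\phi_{ab}^{A'}$ acts on $\tilde f_{bc}$, the derivation $\phi_{ab}^{A'}-\mathrm{id}$ contributes a term of the shape $\theta\cdot d\log f_{bc}$ to $\sigma_{abc}$, and one must check that this extra term is exactly what promotes the naive pair in $T_X\oplus\OO_X$ to an honest section of $E_L$ via the Atiyah gluing. Handling this bookkeeping is what ensures that the cocycle truly lives in $E_L$ rather than in the direct sum, that the coboundary ambiguity cancels correctly when one changes the lifts $U_a^{A'}$, $\phi_{ab}^{A'}$ and $\tilde f_{ab}$, and that the compatibility with ``forget'' holds on the nose. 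Once this verification is in place, the final statement (unobstructedness $\Leftrightarrow$ vanishing of $Obs(X_A,L_A)$ for every small extension) is immediate from the definition of the deformation functor.
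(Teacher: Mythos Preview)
Your proposal is correct and matches the standard treatment. Note that the paper does not actually prove this proposition: it is stated as a summary of results from \cite{ES}, with no proof given. That said, the \v{C}ech-cocycle construction you outline (lifting the gluing automorphisms $\theta_{ij}$ and transition functions $F_{ij}$ to $A'$, extracting the derivation $d_{ijk}$ and the function $g_{ijk}$ from the failure of the cocycle conditions, and assembling them into a $2$-cocycle for $E_L$) is exactly the one the paper itself invokes later, in the proof of Proposition~\ref{thm1}, when it recalls how $Obs(Y_A,L_A)$ is built. So your argument is both sound and in line with the paper's conventions.
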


This proposition immediately yields
\begin{lemm}\label{unobs}
Let $X$ be  a smooth variety with a line bundle $L$. Suppose that $X$ is unobstructed. If the map
\[\cup c_1(L):\Ho^1(X,T_X) \rightarrow \Ho^2(X,\OO_X)\]
is surjective, then $(X,L)$ is unobstructed.
\qed
\end{lemm}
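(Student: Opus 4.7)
The plan is a short diagram chase in the long exact sequence of the preceding proposition. Fix a small extension $0\to \C \to A'\to A \to 0$ and a deformation $(X_A,L_A)$ of the pair $(X,L)$ over $A$; I want to show the obstruction $\mathrm{Obs}(X_A,L_A)\in H^2(X,E_L)$ vanishes, since by the proposition this is equivalent to the pair being extendable to $A'$.

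First I would use the hypothesis that $X$ itself is unobstructed. This means the obstruction $\mathrm{Obs}(X_A)\in H^2(X,T_X)$ for lifting just $X_A$ to $A'$ is zero. By the compatibility statement in the proposition, $\mathrm{Obs}(X_A,L_A)$ maps under $\mathrm{forget}$ to $\mathrm{Obs}(X_A)=0$, so $\mathrm{Obs}(X_A,L_A)$ lies in $\ker(\mathrm{forget})$. By exactness of the long exact sequence at $H^2(X,E_L)$, this kernel equals the image of $H^2(X,\OO_X)\to H^2(X,E_L)$.

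Next I would invoke the surjectivity hypothesis on $\cup c_1(L)$. Exactness at $H^2(X,\OO_X)$ says that $\mathrm{image}(\cup c_1(L))=\ker\bigl(H^2(X,\OO_X)\to H^2(X,E_L)\bigr)$. Since $\cup c_1(L)$ is surjective, this kernel is all of $H^2(X,\OO_X)$, so the map $H^2(X,\OO_X)\to H^2(X,E_L)$ is identically zero. Combined with the previous paragraph, this forces $\mathrm{Obs}(X_A,L_A)=0$, and hence $(X_A,L_A)$ extends to a deformation over $A'$. Since $A'\to A$ was an arbitrary small extension and any surjection of Artin rings factors as a composition of small extensions, $(X,L)$ is unobstructed.

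There is no real obstacle here: the whole argument is a formal consequence of the exact sequence from the proposition and the two hypotheses, with the surjectivity of $\cup c_1(L)$ playing exactly the role needed to kill the one potentially dangerous part $H^2(X,\OO_X)$ of $H^2(X,E_L)$ that is not detected by $H^2(X,T_X)$.
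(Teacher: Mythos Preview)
Your argument is correct and is exactly the diagram chase the paper has in mind: the paper's own proof is simply ``It is due to the above proposition,'' and what you have written is the explicit unpacking of that sentence using the long exact sequence and the compatibility of $\mathrm{forget}$ with obstructions.
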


\section{Unobstructedness of finite \'etale coverings}\label{s3}
In this section, we show that the obstruction element $Obs(X_A,L_A)$ for the pair $(X_A,L_A)$ is preserved when taking a finite \'etale cover
in Proposition~\ref{thm1}. We could use cotangent complexes\cite{I} to give more general results in this section, see Appendix~\ref{app}.

\begin{prop}\label{thm0}
Let $X$ and $Y$ be smooth proper varieties over the complex numbers $\C$. Suppose that we have a map $f:Y\rightarrow X$ which is finite $\acute{e}$tale. Then
\begin{enumerate}
  \item The map $f^*:\Ho^i(X,T_X)\rightarrow \Ho^i(Y,T_Y)$ is injective.
  \item If $X_A$ is a deformation of $X$ over a local $\C$-Artinian ring $A$, then there is a deformation $Y_A$ of $Y$ over $A$ filling into the following cartesian diagram

      \[\xymatrix{
      Y\ar[r]\ar@{}[dr]|-{\Box} \ar[d]_{f} & Y_A\ar[d]^{f_A} \\
      X\ar[r] & X_A.
      }\]
\end{enumerate}

\end{prop}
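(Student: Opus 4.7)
The plan is to dispatch the two assertions separately, exploiting two standard properties of finite \'etale morphisms: the splitting of $f_*\OO_X$ via the trace, and the topological invariance of the \'etale site.

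For (1), since $f$ is \'etale we have $\Omega_{X/Y}=0$, and hence a canonical isomorphism $T_X\cong f^*T_Y$. Because $f$ is finite (in particular affine), $Rf_*=f_*$ on quasi-coherent sheaves, and the projection formula yields
\[
H^i(X,T_X)\;=\;H^i(X,f^*T_Y)\;=\;H^i\bigl(Y,\,T_Y\otimes f_*\OO_X\bigr).
\]
Setting $n=\deg f$, the map $\tfrac{1}{n}\,\mathrm{tr}_f\colon f_*\OO_X\to\OO_Y$ splits the unit $\OO_Y\hookrightarrow f_*\OO_X$, so $f_*\OO_X=\OO_Y\oplus\E$ for some locally free sheaf $\E$. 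Tensoring with $T_Y$ and taking cohomology exhibits $f^*\colon H^i(Y,T_Y)\to H^i(X,T_X)$ as the inclusion of a direct summand, giving the desired injectivity.

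For (2), I would invoke the topological invariance of the \'etale site. The closed immersion $Y\hookrightarrow Y_A$ is a nilpotent thickening because the maximal ideal $\mathfrak{m}_A$ of $A$ is nilpotent. Hence the category of finite \'etale morphisms to $Y_A$ is equivalent, via pullback along $Y\hookrightarrow Y_A$, to the category of finite \'etale morphisms to $Y$; applied to $f\colon X\to Y$ this produces a unique finite \'etale $f_A\colon X_A\to Y_A$ with $X_A\times_{Y_A}Y=X$, which is precisely the cartesian square in the statement. Flatness of $X_A$ over $Spec(A)$ follows by composition: $X_A\to Y_A$ is \'etale and therefore flat, and $Y_A\to Spec(A)$ is flat by hypothesis.

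The main obstacle, if any, is purely bookkeeping: one needs to verify that the splitting chosen in (1) identifies $f^*$ with the projection onto the $H^i(Y,T_Y)$ summand, and that the square produced in (2) really restricts to the given $f\colon X\to Y$ on the closed fibre. Both reduce to standard facts about finite \'etale morphisms in characteristic zero, so the deformation-theoretic substance of the proposition is carried entirely by the topological invariance of the \'etale site.
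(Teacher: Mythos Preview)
Your proposal is correct and follows essentially the same approach as the paper: for (1) you use the trace splitting of $\OO_Y\hookrightarrow f_*\OO_X$ together with the projection formula to exhibit $H^i(Y,T_Y)$ as a direct summand of $H^i(X,T_X)$, and for (2) you invoke the topological invariance of the \'etale site under nilpotent thickenings, exactly as the paper does. Your write-up is in fact slightly more careful about the identification $T_X\cong f^*T_Y$ and the flatness of $X_A$ over $Spec(A)$.
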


\begin{proof}
For the first assertion, we have a map
\[T_X\rightarrow f_*T_Y=T_X\otimes_{\OO_X} f_*\OO_Y\]
by the projection formula.
The tangent bundle $T_X$ is a summand of $T_X\otimes_{\OO_X} f_*\OO_Y$. In fact, by the existence of the trace map $\mathrm{Tr}$, we have
\[\xymatrix{\OO_X\ar[r]\ar@/^1pc/[rrr]^{\cdot \deg(f)} & f_*f^*\OO_X\ar@{=}[r] &f_*\OO_Y\ar[r]_{\mathrm{Tr}} &\OO_X}.\]
In other words, the map \[\mathrm{Tr}\circ f^*:\Ho^i(X,T_X)\rightarrow \Ho^i(Y,T_Y)\] is multiplication by the number $\deg(f)$. In particular, the map $f^*$ is injective.

For the second assertion, since $X \subseteq X_A$ is defined by the nilpotent elements, it is clear that \[\FEt(X_A)=\FEt(X),\] where $\FEt(X)$ is the category of finite \'etale coverings, see~\cite{ET} for the details.
\end{proof}

\begin{prop}
\label{thm00}
Let $X$ be a reduced and proper local complete intersection over the complex numbers $\mathbb{C}$. If a morphism \[f:Y\rightarrow X\] is finite $\acute{e}$tale and $X$ is of finite type over $\mathbb{C}$, then we have a natural map
\[\xymatrix{ \Ext^2(\Omega_X^1,\OO_X) \ar[r]^{f^*} & \Ext^2(\Omega_Y^1,\OO_Y)}
\]
which maps $Obs(X_A)$ to $Obs(Y_A)$ for any small extension \[0\rightarrow (t)\rightarrow A'\rightarrow A\rightarrow 0.\]

\end{prop}
\begin{proof}
We recall how the obstruction element $Obs(X_A)$ is constructed. For the small extension of $A$, we have the exact conormal sequence (\cite[Lemma B.10]{ES})
\[0\rightarrow (t)\rightarrow \Omega^1_{A'/\mathbb{C}}\otimes_{A'} A\rightarrow \Omega_{A/\mathbb{C}}^1\rightarrow 0.\]
We pull it back via the structure morphism $p:X_A\rightarrow \Spec(A)$. We obtain an exact sequence
\begin{equation}\label{eql1}
0\rightarrow \OO_{X_A}\rightarrow p^*(\Omega^1_{A'/\mathbb{C}^1}\otimes_{A'} A)\rightarrow p^*\Omega^1_{A/\mathbb{C}}\rightarrow 0.
\end{equation}
By~\cite[Theorem D.28]{ES}, we have an exact sequence
\begin{equation}\label{eql2}
0\rightarrow p^*(\Omega^1_{A/\mathbb{C}})\rightarrow \Omega^1_{X_A/\mathbb{C}} \rightarrow \Omega^1_{X_A/A}\rightarrow 0.
\end{equation}
Composing (\ref{eql1}) and (\ref{eql2}), we obtain a $2$-extension
\[0\rightarrow \OO_X\rightarrow p^*(\Omega^1_{A'/\mathbb{C}}\otimes_{A'} A) \rightarrow \Omega^1_{X_A/\mathbb{C}} \rightarrow \Omega^1_{X_A/A}\rightarrow 0.\]
It defines the obstruction element
\[Obs(X_A)\in \Ext^2_{\OO_{X_A}}(\Omega^1_{X_A/A},\OO_X)=\Ext^2_{\OO_X}(\Omega^1_{X},\OO_X).\]
Since $f$ is \'etale, it is obvious that the natural map $f^*$ maps the $2$-extension
\[0\rightarrow \OO_X \rightarrow E \rightarrow F \rightarrow \Omega^1_{X/\C}\rightarrow 0.\]
to the $2$-extension
\[0\rightarrow f^*\OO_X=\OO_Y \rightarrow f^*E \rightarrow f^*F \rightarrow f^*\Omega^1_{X/\C}=\Omega^1_{Y/\C} \rightarrow 0.\]
So, to prove that $f^*$ preserves the obstruction, it suffices to prove that the exact sequence
\[0\rightarrow (p\circ f_A)^{*}(\Omega^1_A)\rightarrow f_A^*\Omega^1_{X_A} \rightarrow f_A^*\Omega^1_{X_A/A}\rightarrow 0\]
identifies with
\[0\rightarrow (p\circ f_A)^{*}(\Omega^1_A)\rightarrow \Omega^1_{Y_A} \rightarrow \Omega^1_{Y_A/A}\rightarrow 0.\]
This follows from the fact that $f_A$ is \'etale. Indeed, we have exact sequences
\[0\rightarrow f_A^*\Omega^1_{X_A/\mathbb{C}} \rightarrow \Omega^1_{Y_A/\mathbb{C}} \rightarrow \Omega^1_{Y_A/X_A}\rightarrow 0\]
and
\[0\rightarrow f_A^*\Omega^1_{X_A/A} \rightarrow \Omega^1_{Y_A/A} \rightarrow \Omega^1_{Y_A/X_A}\rightarrow 0.\]
Since $\Omega^1_{Y_A/X_A}=0$, we have
\begin{center}
    $f_A^*\Omega^1_{X_A/\mathbb{C}}=\Omega^1_{Y_A/\mathbb{C}}$ \qquad and \qquad $f_A^*\Omega^1_{X_A/A}=\Omega^1_{Y_A/A}$
\end{center}
This concludes the proof of the proposition.
\end{proof}

\begin{cor}
With the same assumptions as in Proposition~\ref{thm0}, the variety $Y$ is unobstructed if and only if $X$ is unobstructed. In particular, if $X$ is a smooth proper variety with torsion canonical bundle, then $X$ is unobstructed.
\end{cor}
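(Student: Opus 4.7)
The plan is to obtain the ``iff'' from the two previous propositions, and then to produce a Calabi--Yau finite $\acute{e}$tale cover of $X$ when $K_X$ is torsion, so that Bogomolov--Tian--Todorov applies. In the setup of Proposition \ref{thm0} ($f\colon X\to Y$ finite $\acute{e}$tale with $X, Y$ smooth projective), smoothness identifies $\mathrm{Ext}^2(\Omega_Z, \OO_Z) = H^2(Z, T_Z)$ for $Z = X, Y$, so Proposition \ref{thm00} gives a pullback
\[
f^*\colon H^2(Y, T_Y)\longrightarrow H^2(X, T_X)
\]
which is injective by Proposition \ref{thm0}(1) and sends $\mathrm{Obs}(Y_A)\mapsto \mathrm{Obs}(X_A)$, where $X_A$ is the unique $\acute{e}$tale lift of $X$ over $Y_A$ supplied by Proposition \ref{thm0}(2).

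For the direction ``$X$ unobstructed $\Rightarrow$ $Y$ unobstructed'' --- the only direction actually needed for the ``in particular'' clause --- I would fix a small extension $0 \to (t) \to A' \to A \to 0$ and a deformation $Y_A$ of $Y$, form the $\acute{e}$tale lift $X_A$ via Proposition \ref{thm0}(2), use the unobstructedness of $X$ to deduce $\mathrm{Obs}(X_A) = 0$, and then conclude $\mathrm{Obs}(Y_A) = 0$ from the injectivity of $f^*$. The opposite direction is Ran's original observation \cite{RAN}; a natural approach is Galois descent, using the uniqueness of $\acute{e}$tale lifts under nilpotent thickenings (as in Proposition \ref{thm0}(2)) to extend the $G = \mathrm{Aut}(X/Y)$-action from $X$ to a deformation $X_A$ that arises from a $Y_A$, thereby reducing the extension problem for $X_A$ to that of the descended deformation $Y_A := X_A/G$.

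For the ``in particular'' clause, let $n \ge 1$ be the order of $K_X$ in $\mathrm{Pic}(X)$ and fix a trivialization $K_X^{\otimes n}\cong \OO_X$. The index-one (cyclic) cover
\[
\pi\colon \widetilde{X}\;=\;\underline{\mathrm{Spec}}_X\Bigl(\bigoplus_{i=0}^{n-1} K_X^{-i}\Bigr)\longrightarrow X
\]
is finite $\acute{e}$tale of degree $n$, with $K_{\widetilde X} \cong \pi^* K_X \cong \OO_{\widetilde X}$, so $\widetilde X$ is Calabi--Yau. Then Bogomolov--Tian--Todorov (in its algebraic, $T_1$-lifting form, cf.\ \cite{V}) gives the unobstructedness of $\widetilde X$, and the easy direction of the iff applied with $(\widetilde X, X)$ in place of $(X, Y)$ concludes.

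The principal technical content is already packaged in Propositions \ref{thm0} and \ref{thm00}; the step requiring the most care is the reverse direction of the iff, whose Galois-descent argument has to be checked carefully against the deformation data. Happily the main application to torsion $K_X$ only invokes the easier direction, so this subtler point can be set aside.
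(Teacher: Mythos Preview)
Your treatment of the direction ``$X$ unobstructed $\Rightarrow Y$ unobstructed'' and of the ``in particular'' clause is correct and is exactly what the paper does, only more explicitly than the paper's one-line ``the assertion follows from the previous propositions'': lift a given $Y_A$ to $X_A$ via Proposition~\ref{thm0}(2), use Proposition~\ref{thm00} and the injectivity of $f^*$ from Proposition~\ref{thm0}(1) to conclude $\mathrm{Obs}(Y_A)=0$, and for the application build the cyclic index-one cover $\widetilde X\to X$ trivializing $K_X$ and invoke Bogomolov--Tian--Todorov on $\widetilde X$.

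A word of caution on the converse, which you rightly isolate as the delicate half. The Galois-descent sketch as written does not go through: topological invariance of the \'etale site (as in Proposition~\ref{thm0}(2)) lets you lift $X\to Y$ over a \emph{given} $Y_A$, but it does not guarantee that an \emph{arbitrary} deformation $X_A$ of $X$ carries an action of $G=\mathrm{Aut}(X/Y)$ extending the one on the central fibre --- so in general there is no $Y_A:=X_A/G$ to descend to, and the reduction you propose collapses. You already flag this direction as requiring care and as unnecessary for the torsion-$K_X$ application, which is the right instinct; but the sketch would need to be replaced rather than merely checked. For what it is worth, the paper's own proof is silent on this half of the ``if and only if'' as well.
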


\begin{proof}
For any proper smooth variety with torsion canonical bundle, we can find a finite \'etale cover such that its canonical bundle is trivial, the cover is unobstructed by Theorem \ref{Thmtiantod}. Then, the assertion follows from the previous propositions.
\end{proof}

\begin{prop} \label{thm1}
Let $X$ be a smooth proper variety over the complex numbers $\C$ with a line bundle $L$. Suppose that $f:Y\rightarrow X$ is finite \'etale. Then
\begin{enumerate}
  \item The Atiyah extension class is functorial under $f$, i.e., $f^*\mathcal{E}_{L}=\mathcal{E}_{f^*L}$.
  \item The pullback map $f^*:\Ho^i(X,\E_L)\rightarrow \Ho^i(Y,\E_{f^*L})$ is injective.
  \item If we have a small extension $0\rightarrow \C\rightarrow A'\rightarrow A\rightarrow 0$ and a deformation $(X_A,L_A)$ of $(X,L)$, then we have that \[f_A^*(Obs(X_A,L_A))=Obs(Y_A,f_A^*L_A),\]in which the morphism $f_A:Y_A\rightarrow X_A$ is given by Proposition~\ref{thm0} (2).
\end{enumerate}
In particular, the pair $(Y,L)$ is unobstructed if and only if the pair $(X,f^*L)$ is unobstructed, see Definition \ref{defunobs}.
\end{prop}
\begin{proof}
As in Section $2$, the vector bundle $\mathcal{E}_L$ is given by the data trivializations $\OO_{U_a}\oplus T_X|_{U_a}$ and transition functions as follows:
\[\left(
\begin{array}{c}
  g_b \\
  d_b
\end{array}
\right)
=\left(\begin{tabular}{c|c c c}

         1 & & $df_{ab}$ & / $f_{ab}$ \\ \hline
          & &   \\
          & &Id &\\
          & &
       \end{tabular}
\right)
\left(
\begin{array}{c}
  g_a \\
  d_a
\end{array}
\right).
\]
If we pull back the Atiyah extension class of $L$, then we get
\begin{equation}\label{eq1}
0 \rightarrow f^*\OO_X=\OO_Y\rightarrow f^*\mathcal{E}_L\rightarrow f^*T_X=T_Y\rightarrow 0.
\end{equation}
Since the transition function for $f^*L$ is given by
\begin{center}
    $\{f^*(f_{ab})\},f_{ab}\in \Gamma(f^{-1}U_{ab},\OO_Y^*)$
\end{center}
and $f^*(df_{ab})=d(f^*f_{ab})$, we conclude that $\mathcal{E}_{f^*L}$ is the Atiyah extension classes of $f^*L$ by comparing the transition functions of the Atiyah extension classes.

For the second assertion, the proof is similar to that of the second assertion of Proposition~\ref{thm0}.

For the third assertion, we recall how to construct the obstruction element $Obs(X_A,L_A)$.
Suppose that we have a small extension \[0\rightarrow (t)\rightarrow A'\rightarrow A\rightarrow 0\] and an affine covering $\{U_i\}$ of $X$.
We can choose a trivialization \[\theta_i:U_i\times \Spec(A)\xrightarrow{\cong} X_A|_{U_i}\] since $X_A$ is smooth over $A$.
Let $\theta_{ij}$ be $\theta_i^{-1}\theta_j \in \Aut(U_{ij}\times \Spec(A))$. The line bundle $L_A$ is given by transition functions $(F_{ij})$ on $U_{ij}\times A$ such that
\[F_{ij}\theta_{ij}(F_{jk})=F_{ik}.\]
To see whether there is a lifting $(X_{A'},L_{A'})$ of $(X_A,L_A)$ over $\Spec(A')$, we choose a collection $(\theta_{ij}',F_{ij}')$ such that
\begin{enumerate}
  \item $\theta_{ij}'\in \Aut(U_{ij}\times \Spec(A'))$ and $F_{ij}'\in\OO^*_{U_{ij}\times \Spec(A')}$,
  \item $\theta_{ij}'|_{U_{ij}\times A}=\theta_{ij}$ and $F_{ij}'|_{U_{ij}\times A}=F_{ij}.$
\end{enumerate}
Since $\theta_{ij}'\theta_{jk}'(\theta_{ik}')^{-1}|_{U_{ij}\times \Spec(A)}=\Id$, we have
\begin{equation} \label{eqc1}
\theta_{ij}'\theta_{jk}'(\theta_{ik}')^{-1}=\Id+t\cdot d_{ijk} \textbf{ and}
\end{equation}
\begin{equation} \label{eqc2}
F_{ij}'\theta_{ij}'(F_{jk})(F_{ik}')^{-1}=1+t\cdot g_{ijk}
\end{equation}
where $d_{ijk}\in \Gamma(U_{ijk},T_X)$ and $g_{ijk}\in \Gamma(U_{ijk},\OO_X)$.
So the obstruction element $Obs(X_A,L_A)$ can be represented by a $2$-cocycle
\[(g_{ijk},d_{ijk})\in Z^2(U,\mathcal{E}_L).\]
It follows from the infinitesimal lifting property of the \'etale map $Y_A|_{f^{-1}U_i}\rightarrow  X_A|_{U_i}$ that we have a unique trivialization $\widetilde{\theta_i}$ of $Y_A|_{f^{-1}U_i}$ {making the following diagram commute}.
\[\xymatrix{f^{-1}(U_{i})\ar[r]\ar[d] &Y|_{U_i} \ar@{^{(}->}[r] & Y_A|_{f^{-1}U_i}\ar[d]^{\acute{e}t}\\
f^{-1}(U_{i})\times A\ar[r]\ar@{-->}[rru]^{\widetilde{\theta_i}} &U_i\times A\ar[r]^{\theta_i}_{\cong} &X_A|_{U_i}
}\]
Let $\widetilde{\theta_{ij}}$ be $\widetilde{\theta_i}^{-1}\widetilde{\theta_j}$. Similarly, we have $\widetilde{F_{ij}}$, $\widetilde{d}_{ijk}$ and $\widetilde{g}_{ijk}$. Then we have
\[\xymatrix{f^{-1}(U_{ij})\times A\ar[r]^>>>>>{h_A}\ar[d] & U_{ij}\times A\ar[d]\\
f^{-1}(U_{ij})\times A' \ar[r]^>>>>>{h_{A'}} & U_{ij}\times A'
}\]
where $h_A=f\times \Id_{\Spec(A)}$ and $h_{A'}=f\times \Id_{\Spec(A')}$. Hence, we have that
\begin{enumerate}
  \item $h_{A'}^{\sharp}(\theta'_{ij})|_{f^{-1}(U_{ij})\times A} =\widetilde{\theta_{ij}}$
  \item and $h_{A'}^{\sharp}(F'_{ij})|_{f^{-1}(U_{ij})\times A} =h_A^{\sharp}(F_{ij})=\widetilde{F_{ij}}$
\end{enumerate}
where $h_{A'}^{\sharp}(-)$ is the pull-back.
Moreover, we know that $h_A^{\sharp}(\widetilde{F_{ij}})$ defines the line bundle $f_A^*L_A$ on $Y_A$. Combining these facts and applying $h_{A'}^{\sharp}(-)$ to (\ref{eqc1}) and (\ref{eqc2}), we get
\[f^{*}(g_{ijk},d_{ijk})=(\widetilde{g_{ijk}},\widetilde{d_{ijk}}),\]
which concludes the proof.

\end{proof}

\section{Unobstructedness of products} \label{secobsprod}
In this section, we show that the deformations of a product of certain Calabi--Yau manfolds preserve the product structure. It follows the unobstructedness for certain Calabi--Yau varieties with product structures, see Proposition~\ref{unobsprod}.

\begin{lemm}
Let $X$ and $Y$ be smooth and proper varieties. Assume that
\begin{enumerate}
  \item $\Ho^0(Y,T_Y)=\Ho^1(Y,\OO_Y)=0$,
  \item $K_X=\OO_X$ and $K_Y=\OO_Y$ where $K_X$ (resp.~$K_Y$) is the canonical bundle of $X$ (resp.~Y).
\end{enumerate}
Let $Z$ be $X\times Y$. If $Z_A$ is a deformation of $Z$ over a $\C$-local Artinian ring $A$, then we have that
\[Z_A\simeq X_A \times_A Y_A\]
where $X_A$ (resp.~$Y_A$) is a deformation of $X$ (resp.~$Y$) over $A$.
\end{lemm}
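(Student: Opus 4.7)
My approach is induction on the length of $A$, with trivial base case $A = \C$ (taking $X_A = X$, $Y_A = Y$). For the inductive step, fix a small extension $0 \to (t) \to A' \to A \to 0$ and suppose $Z_A \simeq X_A \times_A Y_A$ is already a product. Given $Z_{A'}$ extending $Z_A$, I must exhibit lifts $X_{A'}, Y_{A'}$ of $X_A, Y_A$ over $A'$ together with an isomorphism $Z_{A'} \simeq X_{A'} \times_{A'} Y_{A'}$.

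The central computation is a Künneth decomposition of $H^1(Z, T_Z)$. Starting from $T_Z \simeq p_X^* T_X \oplus p_Y^* T_Y$ and expanding each summand by Künneth, the hypotheses $H^0(Y, T_Y) = 0$ and $H^1(Y, \OO_Y) = 0$ precisely annihilate the cross terms and leave
\[
H^1(Z, T_Z) \simeq H^1(X, T_X) \oplus H^1(Y, T_Y).
\]
Since $K_X$ and $K_Y$ are trivial (in particular torsion), the corollary of Section~\ref{s3} yields the unobstructedness of both $X$ and $Y$, so lifts $X_{A'}$ and $Y_{A'}$ of $X_A$ and $Y_A$ over $A'$ exist. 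Pick any such lifts and form $W_{A'} := X_{A'} \times_{A'} Y_{A'}$. Then $Z_{A'}$ and $W_{A'}$ are two elements of the $H^1(Z, T_Z) \otimes_\C (t)$-torsor of deformations of $Z_A$ over $A'$, differing by some class $\gamma$. Under the Künneth isomorphism write $\gamma = \alpha + \beta$ with $\alpha \in H^1(X, T_X) \otimes (t)$ and $\beta \in H^1(Y, T_Y) \otimes (t)$; translating $X_{A'}$ by $\alpha$ and $Y_{A'}$ by $\beta$ (the torsor action on lifts of $X_A$ and $Y_A$) produces new lifts $X'_{A'}$ and $Y'_{A'}$.

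The main obstacle is the final naturality step: one must check that translating the two factors by $(\alpha, \beta)$ translates their fiber product by $\alpha + \beta$ under the Künneth identification above, so that $X'_{A'} \times_{A'} Y'_{A'} \simeq Z_{A'}$. I plan to verify this via the \v{C}ech cocycle description of Section~2: if $X_{A'}, Y_{A'}$ arise by gluing along cocycles $\phi_X \in Z^1(U, T_X)$ and $\phi_Y \in Z^1(V, T_Y)$, then $X_{A'} \times_{A'} Y_{A'}$ is glued along $p_X^* \phi_X + p_Y^* \phi_Y \in Z^1(U \times V, T_Z)$, which is exactly the image of $(\phi_X, \phi_Y)$ under the Künneth comparison map on $H^1$. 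Granted this compatibility, the matching $X'_{A'} \times_{A'} Y'_{A'} \simeq Z_{A'}$ follows by construction and closes the induction.
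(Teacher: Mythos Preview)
Your proof is correct, but it takes a somewhat different route from the paper's. The paper argues via hulls: by Schlessinger's criterion the functors $Def_X$, $Def_Y$, $Def_Z$ have hulls $R$, $R'$, $R''$, and since $K_X$, $K_Y$, $K_Z$ are trivial these are all formal power series rings. The product map $Def_X\times Def_Y\to Def_Z$ lifts to a map $h_{R\widehat{\otimes}R'}\to h_{R''}$, and to show this is surjective it suffices to check it is an isomorphism on tangent spaces --- precisely the K\"unneth computation $H^1(X,T_X)\oplus H^1(Y,T_Y)\simeq H^1(Z,T_Z)$ that you also carry out (the paper does it via the Leray spectral sequence rather than K\"unneth, to the same effect). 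Once this is established, surjectivity of $g$ follows formally from the smoothness of all three hulls.

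The practical difference is that the hull argument absorbs your ``main obstacle'' --- compatibility of the torsor actions on lifts of $X_A$, $Y_A$, $Z_A$ with the product map --- into the abstract fact that a map of power series rings inducing an isomorphism on cotangent spaces is an isomorphism. Your induction on length makes this step explicit via \v{C}ech cocycles, which is more concrete and avoids invoking Schlessinger's machinery, at the cost of having to verify the naturality by hand. Both arguments rest on the same two inputs (unobstructedness of the factors and the tangent-space isomorphism), so neither is genuinely more general here; they are two standard packagings of the same idea.
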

\begin{proof}
By the Schlessinger's criterion~\cite{AR}, we know the deformation functors of $X$, $Y$ and $Z$ have hulls as follows
\begin{center}
$h_{R} \twoheadrightarrow \Def_X$, $h_{R'}\twoheadrightarrow \Def_Y$ and $h_{R''}\twoheadrightarrow \Def_Z$,
\end{center}
where $R$, $R'$ and $R''$ are complete local rings over $\mathbb{C}$.
We consider a natural transformation $g$ between functors
\begin{center}
$g:\Def_X\times \Def_Y \rightarrow \Def_{Z}=\Def_{X\times Y}$
\end{center}
which associates with $(X_A,Y_A)\in (\Def_X\times \Def_Y)(A)$
\begin{center}
a deformation $(X_A\times_A Y_A)\in \Def_{X\times Y}(A)$ of $Z$
\end{center}
It induces the following commutative diagram, since $h_{R''} \twoheadrightarrow \Def_{X \times Y}$ is a hull:
\[\xymatrix{h_{R}\times h_{R'} \ar@{->>}[d] \ar[r]^f & h_{R''}\ar@{->>}[d] \\
\Def_X\times \Def_Y \ar[r]^g & \Def_{X\times Y}.}\]
We claim that the natural transformation $f$ induced by $g$ is surjective, hence $g$ itself is surjective.
In fact, we have $h_{R \hat{\otimes} R'}=h_{R}\times h_{R'}$ and $X,Y,Z$ are unobstructed since the canonical bundles are trivial. Note that
\begin{center}
    $R\cong \mathbb{C}[[x_1,\ldots,x_n]]$, $R'\cong \mathbb{C}[[x_1,\ldots,x_m]]$ and $R''\cong \mathbb{C}[[x_1,\ldots,x_s]]$,
\end{center}
i.e., the rings $R$, $R'$ and $R''$ are formal power series rings. Therefore, we only need to check
\[g: \Def_X(\mathbb{C}[\varepsilon])\times \Def_Y(\mathbb{C}[\varepsilon]) \rightarrow \Def_{X\times Y}(\mathbb{C}[\varepsilon])\]
is surjective where $\varepsilon$ is the dual number. By the identification of first order infinitesimal deformation of a variety $W$ with $\Ho^1(W,T_W)$, it suffices to prove that
\[\Ho^1(X,T_X)\oplus \Ho^1(Y,T_Y)\rightarrow \Ho^1(X\times Y,T_{X\times Y})\]
is surjective (in fact, it is an isomorphism) where this map is induced by the projections $\pi_1$ and $\pi_2$.
\[\xymatrix{X &X\times Y\ar[l]_{\pi_1}\ar[r]^{\pi_2}&Y}
\]
Since $T_{X\times Y}=\pi_1^* T_X\oplus \pi_2^* T_Y$, we have that
\[\Ho^1(X\times Y, T_{X\times Y})=\Ho^1(X\times Y, \pi_1^* T_X)\oplus \Ho^1(X\times Y,\pi_2^* T_Y).\]
So it is reduced to show that
\begin{center}
    $\Ho^1(X\times Y ,\pi_1^* T_X)=\Ho^1(X,T_X)$ and $\Ho^1(X\times Y, \pi_2^* T_Y)=\Ho^1(Y,T_Y)$.
\end{center}
By the Leray spectral sequence for $\pi_1$, we have
\[0\rightarrow \Ho^1(X,\pi_{1*}\pi_1^*T_X)\rightarrow \Ho^1(X\times Y, \pi_1^*T_X) \rightarrow \Ho^0(X, R^1\pi_{1*}\pi_1^*T_X).\]
By the projection formula and the base change theorem, we have
\begin{enumerate}
    \item $\pi_{1*}\pi_1^*T_X=T_X\otimes \pi_{1*}\OO_{X\times Y}=T_X$, in particular, $\Ho^1(X,\pi_{1*}\pi_1^*T_X)=\Ho^1(X,T_X)$,
    \item $R^1\pi_{1*}\pi_1^*T_X=T_X \otimes R^1\pi_{1*}\OO_{X\times Y}$,
    \item $R^1\pi_{1*}\OO_{X\times Y}=0$ because of the hypothesis $\Ho^1(Y,\OO_Y)=0$.
\end{enumerate}
In summary, we have $\Ho^1(X,T_X)=\Ho^1(X\times Y,\pi^*_1T_X)$. Similarly, for $Y$, we have
\[0\rightarrow \Ho^1(Y,T_Y)\rightarrow \Ho^1(X\times Y, \pi_2^*(T_Y)) \rightarrow \Ho^0(Y,T_Y\otimes R^1\pi_{2*}\OO_{X\times Y}).\]
Since $R^1\pi_{2*}\OO_{X\times Y}$ is a trivial bundle of rank $N$, we have
\[\Ho^0(Y,T_Y\otimes R^1\pi_{2*}\OO_{X\times Y})=\Ho^0(Y,T_Y)^{\oplus N}=0\]by the hypothesis. We get $\Ho^1(Y,T_Y)=\Ho^1(X\times Y,\pi^*_2 T_Y)$.
\end{proof}

\begin{prop} \label{unobsprod}
Suppose that $X$ and $Y$ satisfy the same conditions as in the lemma above. Let $L$ be a line bundle on $Z=X\times Y$ and $x$ (resp.~$y$) be a closed point of $X$ (resp.~$Y$). Assume that the canonical bundle $K_Z$ is trivial.
If $g=\{x\}\times \Id_Y$ and $f=\Id_X\times \{y\}$ are the natural inclusions via the points $x$ and $y$\[\xymatrix{ &X\times Y \\
X\ar[ur]^{f} & & Y\ar[ul]_g }
\]
and the pairs
\begin{center}
 $(X,f^*L)$ and $(Y,g^*L)$
\end{center}
are unobstructed, then the pair $(Z,L)$ is unobstructed (see Definition \ref{defunobs}).
\end{prop}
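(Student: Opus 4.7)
The plan is to reduce the deformation problem for the pair $(Z,L)$ to the separate deformation problems for $(X,f^*L)$ and $(Y,g^*L)$. Fix a small extension $A' \to A$ and a deformation $(Z_A, L_A)$ of $(Z,L)$ over $A$. The preceding lemma gives $Z_A \simeq X_A \times_A Y_A$, where $X_A$ (resp.~$Y_A$) is a deformation of $X$ (resp.~$Y$), so it suffices to produce an extension $(Z_{A'}, L_{A'})$ over $A'$.

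The first step is to decompose $L_A$ as a tensor product of pullbacks. Choose $A$-valued lifts $x_A \colon \Sp(A) \to X_A$ and $y_A \colon \Sp(A) \to Y_A$ of the base points, available by smoothness of $X_A/A$ and $Y_A/A$. Set $L_A^X := L_A|_{X_A \times \{y_A\}}$ and $L_A^Y := L_A|_{\{x_A\} \times Y_A}$. On the closed fiber, Hodge theory and $H^1(Y,\OO_Y)=0$ force $\mathrm{Alb}(Y)=0$, so the classical seesaw principle yields $L = \pi_1^* f^*L \otimes \pi_2^* g^*L$. To propagate this identification over the Artin base, I would compare the exponential exact sequences for $X_A$, $Y_A$, $Z_A$; the K\"unneth identifications
\[H^1(Z,\OO_Z) = H^1(X,\OO_X) \oplus H^1(Y,\OO_Y) = H^1(X,\OO_X)\]
and the analogous one for $H^2(\OO)$ let the five lemma promote the special-fiber isomorphism $\mathrm{Pic}(X)\oplus\mathrm{Pic}(Y) \cong \mathrm{Pic}(Z)$ to an isomorphism $\mathrm{Pic}(X_A)\oplus\mathrm{Pic}(Y_A) \cong \mathrm{Pic}(Z_A)$. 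Hence $L_A = \pi_1^*L_A^X \otimes \pi_2^*L_A^Y$, and $L_A^X$ (resp.~$L_A^Y$) is a deformation of $f^*L$ (resp.~$g^*L$) over $A$.

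In the second step, the unobstructedness of $(X,f^*L)$ produces an extension $(X_{A'}, L_{A'}^X)$ of $(X_A, L_A^X)$ over $A'$, and similarly unobstructedness of $(Y,g^*L)$ yields $(Y_{A'}, L_{A'}^Y)$. Setting $Z_{A'} := X_{A'} \times_{A'} Y_{A'}$ and $L_{A'} := \pi_1^*L_{A'}^X \otimes \pi_2^*L_{A'}^Y$, restriction to $A$ gives back $X_A \times_A Y_A = Z_A$ and, by the decomposition of Step 1, $L_A$. This is the desired extension.

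The main obstacle is the Picard decomposition of Step 1: although the special-fiber seesaw is classical, carrying the factorization across an Artin base calls for the K\"unneth comparison of exponential sequences outlined above. Once this is in place the remainder is formal; a key point is that the hypotheses let one pick $X_{A'}$ and $Y_{A'}$ \emph{independently}, so we are free to assemble $Z_{A'}$ from whatever extensions the pair-unobstructedness hypothesis delivers, rather than having to match a pre-chosen extension of $Z_A$.
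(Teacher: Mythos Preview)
Your argument is correct, but it follows a different route than the paper's. Both proofs begin the same way: invoke the preceding lemma to write $Z_A \simeq X_A \times_A Y_A$, lift the base points to $A$-sections by smoothness, and set $Z_{A'} = X_{A'}\times_{A'}Y_{A'}$ with $X_{A'},Y_{A'}$ coming from the unobstructedness hypotheses. The divergence is in how the line bundle is handled.

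You decompose $L_A$ itself as $\pi_1^*L_A^X\otimes\pi_2^*L_A^Y$ via an inductive Picard argument (seesaw on the closed fibre, then the five lemma on the long exact sequences of $0\to\OO_Z\to\OO_{Z_{A'}}^*\to\OO_{Z_A}^*\to 0$ using the K\"unneth identifications of $H^1(\OO_Z)$ and $H^2(\OO_Z)$), and then extend each tensor factor separately. The paper does \emph{not} decompose $L_A$. Instead it keeps $L_A$ intact and computes the obstruction $Obs_L(Z_A,L_A)\in H^2(Z,\OO_Z)$ to extending it to the chosen $Z_{A'}$: pulling back along $f$ and $g$ sends this class to $Obs_L(X_A,i^*L_A)$ and $Obs_L(Y_A,j^*L_A)$, which vanish because those pairs were just shown to extend; since $(f^*,g^*)\colon H^2(Z,\OO_Z)\to H^2(X,\OO_X)\oplus H^2(Y,\OO_Y)$ is an isomorphism by K\"unneth and $H^1(Y,\OO_Y)=0$, the obstruction is zero and some $L_{A'}$ exists.

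The trade-off: the paper's obstruction computation is shorter and sidesteps entirely the ``main obstacle'' you flag (propagating the Picard splitting across the Artin base). Your approach, on the other hand, produces an explicit $L_{A'}=\pi_1^*L_{A'}^X\otimes\pi_2^*L_{A'}^Y$ rather than a mere existence statement, and makes transparent that the deformation of the pair itself respects the product structure. Both rely on the same K\"unneth isomorphism for $H^2(\OO_Z)$; you embed it in a five-lemma, the paper uses it directly on the obstruction class.
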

\begin{proof}
Let $(Z_A,L_A)$ be a deformation of the pair $(Z,L)$ over a $\mathbb{C}$-local Artinian Ring $A$. Assume that we have a small extension\[0\rightarrow \mathbb{C}\rightarrow A'\rightarrow A\rightarrow 0.\]By the previous lemma, we have a decomposition $Z_A=X_A\times_A Y_A.$ By the smoothness of $Y_A\rightarrow \Spec(A)$, we have a lifting as follows.
\[\xymatrix{ \{y\}\ar@{^{(}->}[d] \ar[r] & Y_A\ar[d]\\
\Spec(A) \ar@{-->}[ru]^{\sigma} \ar[r] & \Spec(A)
}
\]
It induces an embedding
\[i=(\Id_{X_A},\sigma \circ \pi) :X_A \hookrightarrow X_A\times_A Y_A\]
where $\pi$ is the structure morphism of $X_A\rightarrow \Spec(A)$. Similarly, we have an inclusion
\[j :Y_A \hookrightarrow X_A\times_A Y_A.\]
for $Y_A$ via the point $\{x\}.$
Since $(X,L|_{X\times \{y\}})$ and $(Y,L|_{\{x\}\times Y})$ are unobstructed, there exist deformations
\begin{center}
$(X_{A'},L')|_{X_A}=(X_A,L_A|_{X_A})$\qquad and\qquad $(Y_{A'},L'')|_{Y_A}=(Y_A,L_A|_{Y_A})$.
\end{center}
Let $Z_{A'}$ be $X_{A'}\times_{A'} Y_{A'}$. We claim that there is a line bundle $L_{A'}$ on $Z_{A'}$ such that the pair $(Z_{A'},L_{A'})$ is a deformation of $(Z_A,L_A)$ over $A'$. Therefore, the pair $(Z,L)$ is unobstructed.
Note that we have a commutative diagram 
\[\xymatrix{ 0\ar[r] &\OO_Z\ar[d]\ar[r] &\OO_{Z_{A'}}^*\ar[r]\ar[d] & \OO_{Z_{A}}^*\ar[r]\ar[d] &0 \\
 0\ar[r] &\OO_X\ar[r] &\OO_{X_{A'}}^*\ar[r] & \OO_{X_{A}}^*\ar[r] &0 }
\]
with short exact rows. 
It induces the commutative diagram
\[\xymatrix{ \ldots \ar[r]  & \Ho^1(Z_{A'},\OO^*_{Z_{A'}})\ar[r]\ar[d] & \Ho^1(Z_A, \OO_{Z_{A}}^*)\ar[r]\ar[d] & \Ho^2(Z,\OO_Z)\ar[d]^{f^*}\ar[r] &\ldots \\
 \ldots \ar[r] & \Ho^1(X_{A'},\OO_{X_{A'}}^*) \ar[r] & \Ho^1(X_A,\OO_{X_{A}}^*)\ar[r] & \Ho^2(X,\OO_X)\ar[r] & \ldots }
\]
of long exact sequences.
We chase the digram of the second square above to see that
\[\xymatrix{ (Z_A,L_A)\ar@{|->}[d] \ar@{|->}[r] & Obs_L(Z_A,L_A) \ar@{|->}[d]^{f^*}\\
(X_A,L_A|_{X_A})\ar@{|->}[r] & Obs_L(X_A,L_A|_{X_A})=0,}
\]
where $Obs_L(\_)$ is the obstruction. Similarly, we have
\[g^*(Obs_L(Z_A,L_A))=Obs_L(Y_A,L_A|_{Y_A})=0.\]
By the hypothesis $\Ho^1(Y,\OO_Y)=0$ and the K\"unneth formula, we have an isomorphism
\[\xymatrix{(f^*,g^*):\Ho^2(Z,\OO_Z) \ar@{->}[r]^{\simeq} &\Ho^2(\OO_X)\oplus \Ho^2(\OO_Y)}.\]
Therefore, we have $Obs_L(Z_A,L_A)=0$. It implies that there exists a line bundle $L_A'$ on $Z_{A'}$ where $(Z_{A'},L_{A'})$ is a deformation of $(Z_A,L_A)$. 
\end{proof}

\section{Infinitesimal Hodge conjecture for line bundles} \label{secinfhodgeconj}
In this section, we show a general theorem (Theorem~\ref{thmlifting}) confirming the infinitesimal variational Hodge conjecture for line bundles. We start with a theorem due to Deligne.
\begin{theorem}[{\cite[Theorem 3.2]{Bloch} and~\cite{Del}}]
\label{prop0}
Let S be a scheme over $\Spec (\mathbb{Q})$, and let $\pi:X\rightarrow S$ be a proper and smooth morphism. Then
\begin{enumerate} \label{thmlf}
\item The sheaves $R^q \pi_*(\Omega^r_{X/S})$ are locally free of finite type and commute with base change.
\item The spectral sequence 
\[E^{r,q}_1=R^q \pi_*(\Omega^r_{X/S})\Longrightarrow \mathbb{R}^{r+q}\pi_* (\Omega_{X/S}^{\bullet})\]
degenerates at $E_1$.
\item The sheaves $\mathbb{R}^{p}\pi_* (\Omega_{X/S}^{\bullet})$ are locally free of finite type and commute with base change.
\end{enumerate}
\end{theorem}
The following proposition is due to Katz and Bloch.
\begin{prop}[{\cite[Proposition 3.7, Proposition 3.8]{Bloch}}] 
\label{prop2}
\leavevmode
\begin{enumerate}
\item Let $k$ be a field of characteristic $0$, $M$ a finite $k[[t_1,\ldots,t_r]]$-module with integrable connection $\nabla$. Let $M^{\nabla}$ be $Ker(\nabla)$. 
Then $M$ is $M^{\nabla}\otimes_k k[[t_1,\ldots,t_r]]$.
\item Let $A$ be a complete, local, augmented $\mathbb{C}$-algebra (e.g. $A$ artinian), $S=\Spec(A)$, and let $\pi:X\rightarrow S$ be a proper and smooth morphism with  the closed fiber $X_0$. Then $$\mathbb{H}^*(X,\Omega_{X/S}^{\bullet})\cong H^*(X_0,\mathbb{C})\otimes_{\mathbb{C}}A.$$
\end{enumerate}
\end{prop}

In the following, let $A$ be the ring $\mathbb{C}[[x_1,\ldots,x_n]]$ of formal power series with the maximal ideal $m=(x_1,\ldots,x_n)$. Let $X$ be an $m$-adic formal scheme over $\mathrm{Spf}(A)$ Suppose that the structure morphism
\[f:X\rightarrow \mathrm{Spf}(A)\]
is smooth and proper. Denote the sheaf of (continuous) $1$-differentials on the formal scheme $X$ (resp.~$\mathrm{Spf}(A)$) by $\Omega_{X/\mathbb{C}}^{1}$ (resp.~$\Omega_{\mathrm{Spf}(A)/\mathbb{C}}^{1}$). Let $\Omega_{X/A}^1$ be the sheaf of (relative) $1$-differentials for the morphism $X\xrightarrow{f} \mathrm{Spf}(A)$. For the structure sheaf $\OO_X$, we have a morphism $log:1+m\OO_X\rightarrow m\OO_X\text{~with~}$ 
\begin{equation}\label{eqlogmap}
 log(1+x)=\sum\limits_{i=1}^{\infty}(-1)^{n-1}\frac{x^n}{n} \text{~for~} x\in m\OO_X.
 \end{equation}

\begin{lemm} \label{lemmlogiso}
The morphism $log:1+m\OO_X\rightarrow m\OO_X$ (\ref{eqlogmap}) is an isomorphism.
\end{lemm}
\begin{proof}
 It has an inverse $exp:m\OO_X \rightarrow 1+m\OO_X \text{~with~}$  $$exp(x)=\sum\limits_{i=0}^{\infty}\frac{x^n}{n!} \text{~for~} x\in m\OO_X.$$
\end{proof}

On $X$ we have a complex as follows,
\[\OO_X^*\xrightarrow{dlog} \Omega^{1}_{X/A}\xrightarrow{d}  \Omega^{2}_{X/A}\xrightarrow{d}\ldots, \text{~(resp.~}1+m\OO_X\xrightarrow{dlog} \Omega^{1}_{X/A}\xrightarrow{d}  \Omega^{2}_{X/A}\xrightarrow{d}\ldots\text{)}\]
and denote it by $\bigg(\OO^{*}_X\xrightarrow{dlog} \Omega^{\geq 1}_{X/A}\bigg)$ (resp.~$\bigg(1+m\OO_X\xrightarrow{dlog} \Omega^{\geq 1}_{X/A}\bigg)$). The following lemma is clear.
\begin{lemm} \label{lemmlog}
We have a short exact sequence as follows,
\begin{equation}\label{eqshortexseq}
0\rightarrow 1+m\OO_X\rightarrow \OO^{*}_X\rightarrow  \OO^{*}_{X_0} \rightarrow 0
\end{equation}
which induces a short exact sequence of complexes,
\[0\rightarrow \bigg(1+m\OO_X\xrightarrow{dlog} \Omega^{\geq 1}_{X/A}\bigg)\rightarrow \bigg(\OO^{*}_X\xrightarrow{dlog} \Omega^{\geq 1}_{X/A}\bigg)\rightarrow \OO_{X_0}^{*}\rightarrow 0 .\]
The map $log$ (\ref{eqlogmap}) induces an isomorphism of complexes as below,
\[log:\bigg(1+m\OO_X\xrightarrow{dlog} \Omega^{\geq 1}_{X/A}\bigg) \rightarrow \bigg(m\OO_X\xrightarrow{d} \Omega^{\geq 1}_{X/A}\bigg)\]
where the map is identity on $\Omega^{\geq 1}_{X/A}$.
\end{lemm}

We denote the complex $\bigg(1+m\OO_X\xrightarrow{dlog} \Omega^{\geq 1}_{X/A}\bigg)$ (resp.~$\bigg(m\OO_X\xrightarrow{d} \Omega^{\geq 1}_{X/A}\bigg)$) by $\mathrm{Ker}_{\mathrm{dlog}}^{\bullet}$ (resp.~$\mathrm{Ker}^{\bullet}$).

\begin{defi}
We define a map $\Theta: H^1(X_0,\OO_{X_0}^*)\rightarrow \mathbb{H}^2(X,\Omega_{X/A}^{\bullet})$ as the composition of the following maps,
\[H^1(X_0,\OO_{X_0}^*)\xrightarrow{\partial} \mathbb{H}^2(X, \mathrm{Ker}_{\log}^{\bullet})\xrightarrow{log} \mathbb{H}^2(X, \mathrm{Ker}^{\bullet})\rightarrow \mathbb{H}^2(X, \Omega^{\bullet}_{X/A})\]
where the last map is induced by the natural inclusion $\mathrm{Ker}^{\bullet}\hookrightarrow \Omega^{\bullet}_{X/A}$ and $\partial$ is the connecting map induced by the short exact sequence of complexes in Lemma~\ref{lemmlog}. Similarly, we define a map $\theta: H^1(X_0,\OO_{X_0}^*)\rightarrow \mathbb{H}^2(X,\OO_X)$ as the composition of the following maps,
\[H^1(X_0,\OO_{X_0}^*)\xrightarrow{\partial} H^2(X, 1+m\OO_X)\xrightarrow{log} H^2(X, m\OO_X)\rightarrow H^2(X,\OO_X)\]
where the last map is induced by the natural inclusion $m\OO_X\hookrightarrow \OO_X$ and $\partial$ is the connecting map induced by the short exact sequence (\ref{eqshortexseq}).

\end{defi}

\begin{lemm}\label{lemmcomdiag}
With the notations as above, we have a commutative diagram as follows,
\[ \xymatrix{H^1(X_0,\OO_{X_0}^{*})\ar[r]^{\Theta}\ar@{=}[d] &  \mathbb{H}^2(X, \Omega^{\bullet}_{X/A})\ar[d] \\
H^1(X_0,\OO_{X_0}^{*}) \ar[r]^{\theta} & H^2(X,\OO_X) } \]
where the second vertical map is induced by the natural projection $\Omega^{\bullet}_{X/A}\rightarrow \OO_X$.
\end{lemm}
\begin{proof}
This lemma follows from the following commutative diagram:
\[\xymatrix{H^1(X_0,\OO_{X_0}^*)\ar@{=}[d] \ar[r]^{\partial} &\mathbb{H}^2(X, \mathrm{Ker}_{\log}^{\bullet})\ar[d] \ar[r]^{log} &\mathbb{H}^2(X, \mathrm{Ker}^{\bullet})\ar[r]\ar[d] &\mathbb{H}^2(X, \Omega^{\bullet}_{X/A})\ar[d]\\
 H^1(X_0,\OO_{X_0}^*)\ar[r]^{\partial}& H^2(X, 1+m\OO_X)\ar[r]^{log} &H^2(X, m\OO_X)\ar[r] &H^2(X,\OO_X)}
\]
where the natural vertical arrows are induced by the corresponding projections.
\end{proof}

\begin{lemm}\label{lemminjection}
The composition $H^2(X, 1+m\OO_X)\xrightarrow{log} H^2(X, m\OO_X)\rightarrow H^2(X,\OO_X)$ is injective.
\end{lemm}

\begin{proof}
By Theorem~\ref{prop0}, the map $H^*(X,\OO_X)\rightarrow H^*(X_0,\OO_{X_0})$ is surjective. It follows that the map $H^2(X, m\OO_X)\rightarrow H^2(X,\OO_X)$ is injective. Therefore, this lemma follows from Lemma~\ref{lemmlogiso}.
\end{proof}

\begin{lemm}\label{lemmhorizfil}
Let $\nabla$ be the Gauss-Manin connection on $\mathbb{H}^2(X, \Omega^{\bullet}_{X/A})$, and let $L_0 (\in H^1(X_0,\OO_{X_0}^*))$ be a line bundle on $X_0$. Then
\begin{enumerate}
\item  the composition $\nabla\circ \Theta$ is zero,
\item and the reduction of $ \Theta(L_0)$ to the de Rham cohomology $\mathbb{H}^2(X_0, \Omega^{\bullet}_{X_0/\mathbb{C}})$ of $X_0$ is the first Chern class $c_1(L_0)$ of the line bundle $L_0$. 
\end{enumerate}
In other words, $\Theta(L_0)$ is the horizontal lifting of $c_1(L_0)\in \mathbb{H}^2(X_0, \Omega^{\bullet}_{X_0/\mathbb{C}})$.
\end{lemm}
\begin{proof}
Let us recall the Gauss-Manin connection on $\mathbb{H}^2(X, \Omega^{\bullet}_{X/A})$. We have a short exact sequence of locally free $\OO_X$-module as follows
\[0\rightarrow f^*\Omega^1_{\mathrm{Spf}(A)/\mathbb{C}}\rightarrow \Omega^1_{X/\mathbb{C}}\rightarrow \Omega^1_{X/A}\rightarrow 0.\]
It induces a decreasing filtration $L^{\bullet}$ on the complex $\Omega^{\bullet}_{X/\mathbb{C}}$ which gives a short exact sequence 
\begin{equation}\label{eqshortex}
0\rightarrow gr_{L}^1(\Omega^{\bullet}_{X/\mathbb{C}}) \rightarrow \Omega^{\bullet}_{X/\mathbb{C}}/L^2 \rightarrow gr_L^0(\Omega^{\bullet}_{X/\mathbb{C}})\rightarrow 0
\end{equation}
where $gr_{L}^1(\Omega^{\bullet}_{X/\mathbb{C}})$ is $f^*\bigg(\Omega^1_{\mathrm{Spf}(A)/\mathbb{C}}\bigg)\otimes \Omega^{\bullet}_{X/A}[-1]$ and  $gr_{L}^0(\Omega^{\bullet}_{X/\mathbb{C}})$ is $ \Omega^{\bullet}_{X/A}$. The Gauss-Manin connection 
\[\nabla: \mathbb{H}^2(X,\Omega_{X/A}^{\bullet})\rightarrow  \mathbb{H}^2(X,\Omega_{X/A}^{\bullet})\otimes \Omega^1_{\mathrm{Spf}(A)/\mathbb{C}}  \]
is given by the connecting map induced by the short exact sequence (\ref{eqshortex}). Therefore, the composition of $\nabla$ and $\mathbb{H}^2(X,\Omega_{X/\mathbb{C}}^{\bullet}) \rightarrow \mathbb{H}^2(X,\Omega_{X/A}^{\bullet})$ is zero.

\begin{enumerate}
\item
Consider the following two complexes $\mathcal{A}$ and $\mathcal{B}$, and a morphism $log_{\mathcal{AB}}$ between them:
\[log_{\mathcal{AB}}:\mathcal{A}=\bigg(1+m\OO_X\xrightarrow{dlog} \Omega^{\geq 1}_{X/\mathbb{C}}\bigg) \rightarrow \mathcal{B}=\bigg(m\OO_X\xrightarrow{d} \Omega^{\geq 1}_{X/\mathbb{C}}\bigg)\]
where the map is identity on $\Omega^{\geq 1}_{X/\mathbb{C}}$.
We have a commutative diagram of short exact sequences (see Lemma~\ref{lemmlog}) as follows:
\[\xymatrix{ 0\ar[r]& \bigg(1+m\OO_X\xrightarrow{dlog} \Omega^{\geq 1}_{X/\mathbb{C}}\bigg)\ar[r] \ar[d]& \bigg(\OO^{*}_X\xrightarrow{dlog} \Omega^{\geq 1}_{X/\mathbb{C}}\bigg)\ar[r]\ar[d] &\OO_{X_0}^{*}\ar[r]\ar@{=}[d] &0\\
0\ar[r]& \bigg(1+m\OO_X\xrightarrow{dlog} \Omega^{\geq 1}_{X/A}\bigg)\ar[r]& \bigg(\OO^{*}_X\xrightarrow{dlog} \Omega^{\geq 1}_{X/A}\bigg)\ar[r] &\OO_{X_0}^{*}\ar[r] &0.
}\]
It induces the following commutative diagram.
\[\xymatrix{H^1(X_0,\OO_{X_0}^*)\ar@{=}[d] \ar[r]^{\partial} &\mathbb{H}^2(X,\mathcal{A})\ar[d] \ar[r]^{log_{\mathcal{AB}}} &\mathbb{H}^2(X, \mathcal{B})\ar[r]\ar[d] &\mathbb{H}^2(X, \Omega^{\bullet}_{X/\mathbb{C}})\ar[d]\\
 H^1(X_0,\OO_{X_0}^*)\ar[r]^{\partial}& H^2(X,  \mathrm{Ker}_{\log}^{\bullet})\ar[r]^{log} &H^2(X, \mathrm{Ker}^{\bullet})\ar[r] &\mathbb{H}^2(X, \Omega^{\bullet}_{X/A}) }
\]
Note that the composition of $\nabla$ and the map $\mathbb{H}^2(X,\Omega_{X/\mathbb{C}}^{\bullet}) \rightarrow \mathbb{H}^2(X,\Omega_{X/A}^{\bullet})$ is zero by the construction of the Gauss-Manin connection. It follows the first assertion. 

\item
Take a covering $\underline{U}=\{U_i\}$ of $X_0$. Let the $\check{C}$ech cocycle $(f_{ij})$ be the representative of $L_0$ in $H^1(X_0,\OO_{X_0}^*)$, and let $\widetilde{f_{ij}}$ be the lifting of $f_{ij}\in \OO_{X_0}(U_{ij})$ to $\OO_X(U_{ij})$. The composition $log\circ \partial$ of the connecting map $\partial$ and $log$ maps $(f_{ij})$ to the $\check{C}$ech cocycle
\begin{equation}\label{eqcocyle}
\bigg( \big(\frac{d\widetilde{f_{ij}}}{\widetilde{f_{ij}}}\big)_{ij}, \big(log(\widetilde{f_{ij}}\widetilde{f_{jk}}^{-1} \widetilde{f_{ki}})\big)_{ijk}\bigg) \in \Omega^{1}_{X/A}(U_{ij})\oplus m\OO_{X}(U_{ijk}).
\end{equation}
Recall that the first Chern class $c_1(L_0)\in \mathbb{H}^2(X_0,  \Omega^{\geq 1}_{X_0/\mathbb{C}})\subseteq  \mathbb{H}^2(X_0,  \Omega^{\bullet}_{X_0/\mathbb{C}})$ of the line bundle $L_0$ is given by the connecting map of the following exact sequence,
\[0\rightarrow  \Omega^{\geq 1}_{X_0/\mathbb{C}}[-1] \rightarrow\bigg(\OO^{*}_{X_0}\xrightarrow{dlog} \Omega^{\geq 1}_{X_0/\mathbb{C}}\bigg) \rightarrow \OO_{X_0}^*\rightarrow 0.\]
Therefore, the reduction of the cocycle (\ref{eqcocyle}) module $m$  \[ \big(\frac{df_{ij}}{f_{ij}}\big) \in \Omega^1_{X_0/\mathbb{C}}(U_{ij}) \]
gives a $\check{C}ech$ representative of the reduction of $\Theta(L_0)$ to $\mathbb{H}^2(X_0, \Omega^{\bullet}_{X_0/\mathbb{C}})$ which is the cocycle for $c_1(L_0)$ in $\mathbb{H}^2(X_0, \Omega^{\bullet}_{X_0/\mathbb{C}})$. It follows the second assertion.
\end{enumerate}

\end{proof}

\begin{theorem}
\label{thmlifting}
Let $A$ be the ring $\mathbb{C}[[x_1,\ldots,x_n]]$ with the maximal ideal $m=(x_1,\ldots,x_n)$ as the ideal of definition. Let $X$ be an $m$-adic formal scheme over $\mathrm{Spf}(A)$. Suppose that the structure morphism $f:X\rightarrow \mathrm{Spf}(A)$ is proper and formally smooth. If $L_0$ is a line bundle on the special fiber $X_0/\mathbb{C}$ of $f$, then $L_0$ is a restriction of a line bundle on $X$ if and only if the horizontal lifting of the first Chern class $c_1(L_0)$ is in the Hodge filtration $\mathrm{F}^1 \mathbb{H}^2(X,\Omega_{X/A}^{\bullet})$. 
\end{theorem}

\begin{proof}
By Theorem~\ref{prop2} and Lemma~\ref{lemmhorizfil} (1), the element $\Theta(L_0)$ is the unique horizontal lifting of $c_1(L_0)\in \mathbb{H}^2(X_0,\Omega_{X_0}^{\bullet})$. By Lemma~\ref{lemminjection}, we have an exact sequence as follows.
\[\mathrm{Pic}(X)\rightarrow \mathrm{Pic}(X_0)\xrightarrow{\theta} H^2(X,\OO_X)\] 
By Lemma~\ref{lemmcomdiag}, we have a commutative diagram
\[ \xymatrix{ && \mathrm{F}^1 \mathbb{H}^2(X,\Omega_{X/A}^{\bullet})\ar@{^{(}->}[d]\\
& \mathrm{Pic}(X_0)\ar[r]^{\Theta} \ar@{=}[d] & \mathbb{H}^2(X,\Omega_{X/A}^{\bullet})\ar@{->>}[d] \\
\mathrm{Pic}(X)\ar[r] \ar[ru]&\mathrm{Pic}(X_0)\ar[r]^{\theta} &  H^2(X,\OO_X) }\]
where the right vertical sequence is exact by Theorem~\ref{prop0}. Therefore, we conclude the theorem by a simple diagram chase.
\end{proof}

\begin{remark}\label{rmkdefcycle}
The theorem above recovers a corollary of \cite[Theorem 1.2]{defcycle} if $A$ is $\mathbb{C}[[t]]$ and $X$ is an abelian scheme over $A$.
\end{remark}

\section{The smoothness of Hodge loci} \label{secav}
In this section, we show that the pair $(X,L)$ is unobstructed for any abelian
variety $X$ along with a line bundle $L$ on it. 

Let $X$ be an $n$-dimensional abelian variety over $\C$, the field of complex
numbers. Recall that we have a short exact sequence:
$$
0 \to H^0(X,\Omega^1_X) \to H^1_{dR}(X/\C) \to H^1(X,\mathcal{O}_X) \to 0.
$$

In the following, let us denote $V \coloneqq H^1_{dR}(X/\C)$ and $F \coloneqq
H^0(X,\Omega^1_X)$. Because $V$ has a real structure coming from the
identification $H^1_{dR}(X/\C) \cong H^1_{sing}(X,\R) \otimes_{\R} \C$, we have
a complex conjugation $V \xrightarrow{\overline{(\cdot)}} V$ on $V$. Hodge theory
implies that we have $F \oplus \overline{F} = V$.

From $X$ we get a point $P$ of the Grassmannian $G \coloneqq \Gr(n,V)$
corresponding to $F \subset V$. 
Inside $G$, the linear algebraic condition $F \oplus \overline{F} = V$ coming from Hodge theory defines a complex analytic open 
$\widetilde{G} \coloneqq \{[W \subset V] \in G \mid W \oplus \overline{W} = V\}$ 
known as the \emph{period domain}.

Let $L$ be a line bundle on $X$, let us denote its first Chern class by $c \in
\wedge^2 V$. Let us denote the incidence subvariety 
$\{[W \subset V] \in G \mid \pi_W(c) = 0 \}$ by
$G^{c}$, where $\pi_W \colon \wedge^2 V \to \wedge^2 (V/W)$ denotes the induced
projection. Because $c$ is algebraic, its image in $\wedge^2 (V/F)$ is zero. 
Hence the point $P$ lies in $G^{c}$. Lastly, let us denote the intersection 
$\widetilde{G} \cap G^c \eqqcolon \widetilde{G^c}$.

\begin{thm}
\label{linear algebra smooth}
The complex analytic open $\widetilde{G^c} \subset G^c$ is smooth.
\end{thm}

\begin{warning}
In general $G^c$ is not smooth. 
The complex analytic open $\widetilde{G^c}$, however, is smooth.
\end{warning}

Before proving this Theorem, 
we need some preliminary discussion concerning the related linear algebra.

\begin{defi}
The element $c \in \wedge^2 V \subset V^{\otimes 2}$ induces a linear map 
$V^* \to V$. 
We define the \emph{rank} of $c$ to be the dimension of the image of this linear map,
which we denote by $r(c)$.
\end{defi}

The condition of an element $[W \subset V]$ in the Grassmannian lies in $G^c$
now has a neat linear algebra characterization. 
Indeed, dualizing the short exact sequence
$$
0 \to W \to V \to V/W \to 0,
$$
we get
$$
0 \to W^{\perp}=(V/W)^* \to V^* \to W^* \to 0.
$$
And we have the following:

\begin{claim}
\label{Gc claim}
An element $[W \subset V]$ in the Grassmannian lies in $G^c$
if and only if the composition map $W^{\perp} \to V^* \xrightarrow{c} V \to V/W$
is zero.
\end{claim}

This is basic linear algebra and we omit the proof here. 
Let $Q \in G^c$ be the point corresponding to $[W \subset V]$.
By Claim~\ref{Gc claim} above, the linear map $c \colon V^* \to V$
can be extended to the following diagram:
$$
\xymatrix{
W^{\perp} \ar@{-->}[r]^{c'} \ar[d] & W \ar[d] \\
V^* \ar[r]^{c} \ar[d] & V \ar[d] \\
W^* \ar@{-->}[r]^{c''} & V/W. \\
 \ar @{} [ur] |{\text{diagram } (*)}
}
$$

\begin{remark}
\label{linear algebra remark}
It is worth noting that the condition of $c \in \wedge^2 V$ implies that
$c'$ and $c''$ are negative transpose to each other.
\end{remark}

Finally, we want to understand the condition of an element 
$[W \subset V]$ in the Grassmannian lies in $\widetilde{G^c}$.
Suppose $Q \in \widetilde{G}$ corresponds to $[W \subset V]$.
Now if we fix a basis $\{w_1, \ldots, w_n\}$ of $W$,
we get an induced basis 
$\{w_1, \ldots, w_n, \overline{w_1}, \ldots, \overline{w_n}\}$ of $V$
and the induced dual basis
$\{w_1^*, \ldots, w_n^*, \overline{w_1}^*, \ldots, \overline{w_n}^*\}$ of $V^*$.
Under this set of basis, the linear map $c$ can be represented by
$$
c = 
\begin{pmatrix}
\alpha & \beta \\
\gamma & \delta
\end{pmatrix}.
$$
Recall that $V$ has a real structure, 
therefore it makes sense to say that $c \in \wedge^2 V$ is a \emph{real} element.
The following is a key lemma. 

\begin{lemm}
\label{Gc lemma}
If the point $Q$ corresponding to $[W \subset V]$ lies in $\widetilde{G^c}$,
then $\alpha = \delta = 0$. 
Moreover, the $c'$ (resp.~$c''$) in diagram~(*) is represented by 
$\beta$ (resp.~$\gamma$).
In particular, we have $r(c) = 2 \rank (\beta)$.
\end{lemm}

\begin{proof}
Since $Q$ lies in $\widetilde{G}$, the Hodge decomposition 
induces a natural decomposition of 
$\wedge^2 V = \bigoplus_{i + j = 2; i,j \geq 0} (\wedge^2 V)^{i,j}$. 
The condition of $Q \in \widetilde{G^c}$ implies that $c$ has no $(0,2)$-component.
Since $c$ is a real class, it also has no $(2,0)$-component.
This implies the first statement.
The second statement is immediate.

Now we see that 
$$
c = 
\begin{pmatrix}
0 & c' = \beta \\
c'' = \gamma & 0
\end{pmatrix}.
$$

Lastly, by Remark~\ref{linear algebra remark}, 
we see that the rank of $\beta$ and $\gamma$ are the same. 
Hence the last statement follows.
\end{proof}

In view of Claim~\ref{Gc claim} and Lemma~\ref{Gc lemma}, 
we see that the rank $r(c)$ must be even and the rank of the associated linear map
$c'$ is a constant (half of $r(c)$). 
Now we are ready to give the following:

\begin{proof}[proof of Theorem~\ref{linear algebra smooth}]
Since $\widetilde{G^c} \subset G^c$ is a complex analytic open,
it suffices to show that for any $Q \in \widetilde{G^c}$,
the dimension of the tangent space $\dim T_{G^c,Q}$ is independent of $Q$.
In the following, 
we are going to prove that $\dim T_{G^c,Q}$ solely depends on $r(c)$.

Let $Q \in \widetilde{G^c}$ correspond to $W \subset V$,
and let us adopt the notation in diagram (*).
It is well-known that $T_{G,Q} = \Hom(W, V/W)$.
After chasing through the identification, with the aid of Claim~\ref{Gc claim},
we see that 
$$
T_{G^c,Q} = \{\phi \colon W \to V/W \mid \phi \circ c' = 0\} \\
= \Hom(W/c'(W^{\perp}), V/W) \subset \Hom(W, V/W).
$$
The dimension of this space is 
$(n- \rank(c')) \cdot n$, which is equal to $(n - \frac{r(c)}{2}) \cdot n$ by Lemma~\ref{Gc lemma}.
Therefore the dimension of the tanget space solely depends on $r(c)$. 
Hence we see that $\widetilde{G^c}$ is smooth.
\end{proof}

With Theorem~\ref{linear algebra smooth} in hand,
we are ready to study the deformation functor of the abelian variety $X$
along with the line bundle $L$.
For a preliminary discussion about deformation functor of an abelian variety,
we refer the reader to~\cite[Section 2.2]{Oort} and the references listed therein.
For instance, it is worth noting that the deformation functor of an abelian variety
as a variety is the same as that as a group variety, 
see~\cite[Proposition 2.2.6]{Oort}.

The deformation functor of the abelian variety $X$
is pro-represented by $\widehat{G_P}$, the formal completion of $G$ at $P$
and the forgetful map $\Def(X,L) \to \Def(X) = \widehat{G_P}$
factors through $\widehat{G^c_P}$, the \emph{Hodge locus} associated with $L$.
Therefore we get a diagram:
$$
\xymatrix{
\Def(X,L) \ar[r] \ar[rd]^{f} & \Def(X) = \widehat{G_P} \\
& \widehat{G^c_P} \ar@{^{(}->}[u]
}.
$$

\begin{prop}
\label{deformation surjection}
The map $f \colon \Def(X,L) \to \widehat{G^c_P}$ is surjective.
In fact, it has a section.
\end{prop}

\begin{proof}
It suffices to show that $f$ has a section.
By the inclusion $\widehat{G^c_P} \hookrightarrow \widehat{G_P} = \Def(X)$,
we get a formal abelian scheme $\mathcal{X} \to \widehat{G^c_P}$.
The assertion about section now simply means that we can find
a line bundle $\mathcal{L}$ on $\mathcal{X}$ with the property that
$\mathcal{L}_{\mathcal{X}_P} \cong L$. 
Since $G^c$ is smooth at the point $P$, by Theorem~\ref{linear algebra smooth},
we see that $\widehat{G^c_P}$ is of the form $\mathrm{Spf}(A)$ 
as in the Theorem~\ref{thmlifting}.
Therefore Theorem~\ref{thmlifting} guarantees that we may find the required
formal line bundle $\mathcal{L}$ on $\mathcal{X}$.
\end{proof}

Let $\mathcal{X}$ and $\mathcal{L}$ be as in the proof of 
Proposition~\ref{deformation surjection}.
Consider the dual formal abelian scheme $Pic^0_{\mathcal{X}/\widehat{G^c_P}}$,
and let $\mathcal{Y}$ be the formal completion of $Pic_{\mathcal{X}/\widehat{G^c_P}}$
along its identity section, which is formally smooth 
(c.f.~\cite[Proposition 6.7]{GIT}).
The fibre product (in the category of formal schemes) 
$\mathcal{X} \times_{\widehat{G^c_P}} \mathcal{Y}$ defines a formal abelian
scheme $\mathcal{A}$ over $\mathcal{Y}$. 
We summarize the situation in the following diagram:
$$
\xymatrix{
\mathcal{A} \ar[r]^{\pi} \ar[d] & \mathcal{X} \ar[d] \\
\mathcal{Y} \ar[r] & \widehat{G^c_P}.
}
$$

Let $\mathcal{P}$ be the restriction of the Poincar\'{e} line bundle on $\mathcal{A}$.
The pair $(\mathcal{A},\mathcal{P} \otimes \pi^*(\mathcal{L}))$ defines
a pro-object of $\Def(X,L)$ on $\mathcal{Y}$, 
which gives rise to a morphism $\mathcal{Y} \to \Def(X,L)$.

\begin{thm}
\label{abelian variety unobstructed}
The morphism $g \colon \mathcal{Y} \to \Def(X,L)$ constructed above
is an isomorphism.
In particular, since $\mathcal{Y}$ is formally smooth, the pair $(X,L)$ is unobstructed (see Definition \ref{defunobs}).
\end{thm}

\begin{proof}
These all follow easily from the definition of the dual abelian scheme
and Poincar\'{e} line bundle.
\end{proof}

\section{The main theorem} \label{secmainthm}
Let X be a proper algebraic manifold. Recall that
\begin{enumerate}
  \item $X$ is Calabi--Yau if $\dim(X)$ is at least $3$ and $h^0(X,\wedge^p\Omega_X)=0$ for $0<p<\dim(X)$;
  \item $X$ is irreducible holomorphic symplectic if $X$ is simply-connected and $\Ho^{2,0}(X)$ is spanned by the class of a holomorphic symplectic form $\sigma$.
\end{enumerate}

For a irreducible holomorphic symplectic $X$ of dimension $2m$, we have the following facts, see~\cite{CY} for details.
\begin{enumerate}
\item On $\Ho^2(X,\mathbb{R})$, there is a (Beauville--Bogomolov) quadratic form $q_X$ such that
\[q_X(\alpha)=\frac{m}{2}\int_X \alpha^2 (\sigma \bar{\sigma})^{m-1}+(1-m)\left(\int_X \alpha\sigma^{m-1}\overline{\sigma}^m\right)\left(\int_X \alpha \sigma^m \overline{\sigma}^{m-1}\right).\]
  \item $\Ho^0(X,\wedge^{*}\Omega_X)=\mathbb{C}[\sigma]$.
  \item The Beauville--Bogomolov quadratic form $q_X$ is positive definite on $\mathbb{R}[w]\oplus(\Ho^{0,2}\oplus \Ho^{2,0})(X)|_{\mathbb{R}}$, negative definite on the primitive $(1,1)$-part $\Ho^{1,1}(X)_w$ and these two spaces are orthogonal with respect to $q_X$ where $[w]$ is a K\"ahler class.
\end{enumerate}

\begin{lemm} \label{hk2}
Let $Y$ be a proper algebraic manifold with a line bundle $L$. If $Y$ is simply connected and $c_1(L)(\in \Ho^2(Y,\mathbb{Q}))$ is zero, then $L$ is a trivial line bundle.
\end{lemm}
\begin{proof}
Since we have exact sequence
\[\xymatrix{ \Ho^1(Y,\OO_Y)\ar[r] & \Ho^1(Y,\OO_Y^*)\ar[r]^{c_1} &\Ho^2(X,\mathbb{Z})
}\]
and $Y$ is simply connected (in particular $\Ho^1(Y,\OO_Y)=0$), we have that $L$ is a torsion line bundle, i.e., there exists the smallest $N\in\mathbb{N}$ such that \[L^{\otimes N}=\OO_Y.\]Therefore, there is a finite $N$-covering of $X$ 
\[\underline{\Spec}(\OO_Y\oplus L^{-1}\oplus\ldots \oplus L^{-N+1})\rightarrow Y\]
induced by $L$. Since $Y$ is simply connected, $N$ has to be $1$. 
\end{proof}

\begin{lemm} \label{hk1}
Let $Y$ be a proper algebraic manifold. If $\Ho^1(Y,\OO_Y)=0$, then any deformation $(Y_A,L_A)$ of the pair $(Y,\OO_Y)$ is $(Y_A,\OO_{Y_A})$.
\end{lemm}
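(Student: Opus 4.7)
The plan is to induct on the length of the Artin ring $A$. The base case $A=\C$ is trivial since $L_A=L=\OO_Y$ by definition of a deformation.

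For the inductive step, choose a small extension
\[0\rightarrow (t)\rightarrow A'\rightarrow A\rightarrow 0,\]
so that $t\cdot\mathfrak{m}_{A'}=0$, and suppose the lemma is known for $A$. Given $(Y_{A'},L_{A'})$ deforming $(Y,\OO_Y)$, its restriction to $Y_A$ is a deformation $(Y_A,L_A)$ of $(Y,\OO_Y)$ over $A$, hence $L_A\cong \OO_{Y_A}$ by the inductive hypothesis. So we only need to show that triviality propagates from $L_A$ to $L_{A'}$.

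The key input is the exponential-type short exact sequence on the common topological space $|Y|=|Y_A|=|Y_{A'}|$:
\[0\rightarrow \OO_Y\xrightarrow{\;x\mapsto 1+tx\;}\OO^*_{Y_{A'}}\rightarrow \OO^*_{Y_A}\rightarrow 0,\]
whose exactness uses exactly the small-extension condition $t\cdot\mathfrak{m}_{A'}=0$ to identify the kernel with $\OO_Y$ additively. Taking cohomology gives an exact sequence
\[H^1(Y,\OO_Y)\rightarrow H^1(Y_{A'},\OO^*_{Y_{A'}})\rightarrow H^1(Y_A,\OO^*_{Y_A}),\]
and the class $[L_{A'}]$ maps to $[L_A]=0$. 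By exactness $[L_{A'}]$ is the image of some class in $H^1(Y,\OO_Y)$, which vanishes by hypothesis. Hence $L_{A'}\cong \OO_{Y_{A'}}$, completing the induction.

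The argument is essentially forced once the small-extension short exact sequence of units is in place; the only step requiring a touch of care is checking that $t\cdot\mathfrak{m}_{A'}=0$ indeed makes $x\mapsto 1+tx$ a well-defined sheaf map with image the kernel of $\OO^*_{Y_{A'}}\rightarrow \OO^*_{Y_A}$. After that, the hypothesis $H^1(Y,\OO_Y)=0$ does all the work, so there is no real obstacle. Note that simple-connectedness is a stronger assumption than needed: only $H^1(Y,\OO_Y)=0$ is used (and in the simply-connected case this follows from Hodge theory via $H^1(Y,\C)=0$).
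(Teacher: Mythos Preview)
Your proof is correct and follows essentially the same approach as the paper: induction on the length of $A$, using the short exact sequence $0\to \OO_Y \to \OO_{Y_{A'}}^*\to \OO_{Y_A}^*\to 0$ associated to a small extension, and concluding from $H^1(Y,\OO_Y)=0$ that the restriction map on Picard groups is injective. The only differences are cosmetic: you make the map $x\mapsto 1+tx$ explicit and add the remark about simple-connectedness, but the argument is the same.
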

\begin{proof}
We prove this lemma by induction on the length $l(A)$ of $A$, with the case of $l(A)=1$ being trivial. In general, 
we consider a small extension \[0\rightarrow\mathbb{C}\rightarrow A'\rightarrow A\rightarrow 0\]
and a flat deformation $Y_{A'}$ of $Y$ over $A'$.
Denote by $Y_A$ the pull-back $(Y_{A'})_{A}$ of $Y_{A'}$ to $A$. We have an exact sequence
\[0 \rightarrow \OO_{Y}\rightarrow \OO_{Y_{A'}}^* \rightarrow \OO_{Y_{A}}^* \rightarrow 0\]
which gives rise to an exact sequence
\begin{equation} \label{exa}
\xymatrix{\Ho^1(Y,\OO_Y)\ar[r] &\Ho^1(Y_{A'},\OO_{Y_{A'}}^*) \ar[r]^{p} & \Ho^1(Y_A,\OO^*_{Y_A})}.
\end{equation}
Suppose that $(Y_{A'},L_{A'})$ is a deformation of $(Y,\OO_Y)$. By the induction, we know $(Y_A, (L_{A'})_A)$ is isomorphic to the deformation $(Y_{A},\OO_{Y_{A}})$. It follows that \[p(Y_{A'},L_{A'})=p(Y_{A'},\OO_{Y_{A'}}).\] Since $\Ho^1(Y,\OO_Y)$ is zero, $(Y_{A'},L_{A'})$ is isomorphic to $(Y_{A'},\OO_{Y_{A'}})$ by the exact sequence (\ref{exa}).
\end{proof}

\begin{prop} \label{unobshk}
Let $Y$ be an irreducible holomorphic symplectic manifold of dimension $n=2m$. If $L$ is a line bundle on $Y$, then 
$(Y,L)$ is unobstructed (see Definition \ref{defunobs}).
\end{prop}

\begin{proof}
If $c_1(L)=0 \in \Ho^2(Y,\mathbb{Z})$, then by Lemmas~\ref{hk1} and ~\ref{hk2}, the pair $(Y_A,L_A)$ is isomorphic to $(Y_A,\OO_{A})$ which is obviously unobstructed.

Note that $\Ho^2(Y,\OO_Y)=\mathbb{C}$. If $c_1(L)\neq 0 \in \Ho^2(Y,\mathbb{Z}) $, then it follows from Lemma~\ref{unobs} that we only need to prove that the map \[\cup c_1(L):\Ho^1(Y,T_Y) \rightarrow \Ho^2(Y,\OO_Y)=\mathbb{C}\]
induced by cup product of $c_1(L)$ is nonzero (hence surjective). By the triviality of the canonical bundle of $Y$, we have $T_Y=\wedge^{n-1} \Omega_Y^1$. Moreover, the map $\cup c_1(L)$ fits into the sequence of maps
\[\xymatrixcolsep{3pc}\xymatrix{
\Ho^0(\wedge^{n-2}\Omega_Y)\ar[r]^{\cup [w]} \ar@/^2pc/[rrr]^s & \Ho^1(\wedge^{n-1}\Omega_Y) \ar[r]^{\cup c_1(L)}& \Ho^2(\OO_Y) \ar@{=}[r] & \Ho^2(\wedge ^n \Omega_Y)
}
\]
where $[w]$ is a K\"ahler class in $\Ho^{1,1}(X)$.
To prove that the map $\cup c_1(L)$ is nonzero, it suffices to show that the map $s$ is nonzero.
Since $\Ho^0(\wedge ^{n-2}\Omega_Y)=\mathbb{C}$ is generated by $\sigma^{m-1}$, the map $s$ sends
\begin{center}
    $\sigma^{m-1}$ to $\sigma^{m-1} (c_1(L))\cdot[w] \in \Ho^2(\wedge ^n\Omega_Y).$
\end{center}
Take the property $(3)$ of $q_X$ at the beginning of this section into consideration. If $c_1(L)$ is not zero, then we can choose a K\"ahler class $[w] \in \Ho^{1,1}(Y)$ such that the Beauville--Bogomolov form \[q_Y([w],c_1(L))=\frac{m}{2}\int_Y (c_1(L))\cdot [w] (\sigma \overline{\sigma})^{m-1}\] is non-zero, see~\cite[Corollary 23.11]{CY}. It implies that $s$ is a nonzero map.
\end{proof}
\textbf{Proof of Theorem~\ref{mainthm}.}
\begin{proof}
By the Beauville--Bogomolov decomposition theorem, see~\cite{BEV} and~\cite{BO}, 
there exists a finite \'etale cover $\widetilde{X}\rightarrow X$ such that $\widetilde{X}$ is a product of 
an abelian variety and factors each of which is either an irreducible holomorphic symplectic variety or a Calabi--Yau variety. 
If $\widetilde{X}$ is an abelian variety, then the theorem follows from Proposition~\ref{thm1} and Theorem~\ref{abelian variety unobstructed}. 
Therefore, we can assume that $\widetilde{X}$ does not only contain a torus factor, 
i.e., we have the decomposition $\widetilde{X}=\big(\Pi_{i=1}^{n} Y_i \big) \times A$ 
where $Y_i$ is either a irreducible holomorphic symplectic manifold 
or a Calabi--Yau manifold of dimension at least $3$ and $A$ is an abelian variety. 

For a Calabi--Yau variety $Y$ of dimension at least $3$, the deformations of a pair $(Y,\widetilde{L})$ are unobstructed since $\Ho^2(Y,\OO_Y)=0.$ Moreover, a product of Calabi--Yau manifolds and irreducible holomorphic symplectic manifolds has trivial canonical bundle and has no nonzero global tangent field. Therefore, the decomposition $\widetilde{X}=\big(\Pi_{i=1}^{n} Y_i \big) \times A$ satisfies the conditions of Proposition~\ref{unobsprod}. Our theorem now follows from Proposition~\ref{thm1}, Proposition~\ref{unobsprod}, Theorem~\ref{abelian variety unobstructed} and Proposition~\ref{unobshk}.
\end{proof}

\appendix
\section{Deformations via cotangent complexes} \label{app}
In this appendix, we give an alternative proof of Proposition~\ref{thm00} and the third assertion of Proposition~\ref{thm1} in a slightly general form by cotangent complexes. The cotangent complex is a very powerful machinery to attack deformation problems, one advantage is its functoriality.
\begin{lemm}
Suppose that we have a small extension \[0\rightarrow (t)\rightarrow A' \rightarrow A\rightarrow 0\]
and cartesian diagrams of algebraic schemes
\[\xymatrix{X_0\ar[r]^{g_0}\ar[d]\ar@{}[dr]|-{\Box} & Y_0 \ar[d] \ar@{}[dr]|-{\Box} \ar[r]^{f_0} &\Spec(\mathbb{C})\ar[d]\\
X_A\ar[r]^{g} & Y_A\ar[r]^f & \Spec(A)}
\]where $g$ is finite $\acute{e}$tale and $f$ is flat and of finite type. Then, there is a natural map $g_0^*$ induced by $g_0$
\[g_0^*:\Ext^2(\mathbb{L}_{Y_0},\OO_{Y_0}) \rightarrow \Ext^2(\mathbb{L}_{X_0},\OO_{X_0})\]
mapping $Obs(Y_A)$ to $Obs(X_A)$, where $\mathbb{L}_{Y_0}$ (resp.~$\mathbb{L}_{X_0}$) is the cotangent complex of $Y_0$ (resp.~$X_0$) over $\mathbb{C}$.
\end{lemm}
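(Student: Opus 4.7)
The plan is to construct $\mathrm{Obs}(Y_A)$ and $\mathrm{Obs}(X_A)$ in parallel from the standard cotangent-complex formalism, and then exploit the fact that a finite étale morphism has vanishing relative cotangent complex to identify one construction with the pullback of the other.

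First, I would recall Illusie's construction. The small extension $0\to (t)\to A'\to A\to 0$ gives a class in $\mathrm{Ext}^{1}_{A}(\mathbb{L}_{A/\C},(t))$, namely the boundary of the truncated transitivity triangle
\[ (t)\longrightarrow \mathbb{L}_{A'/\C}\otimes_{A'}A\longrightarrow \mathbb{L}_{A/\C}.\]
Pulling back along $f:Y_A\to\mathrm{Spec}(A)$ and splicing with the relative cotangent triangle
\[ f^{*}\mathbb{L}_{A/\C}\longrightarrow \mathbb{L}_{Y_A/\C}\longrightarrow \mathbb{L}_{Y_A/A}\]
yields a canonical element of $\mathrm{Ext}^{2}_{\OO_{Y_0}}(\mathbb{L}_{Y_A/A}\otimes^{\mathbb{L}}\OO_{Y_0},\OO_{Y_0})$. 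Using the base-change isomorphism $\mathbb{L}_{Y_A/A}\otimes^{\mathbb{L}}\OO_{Y_0}\simeq \mathbb{L}_{Y_0/\C}$ (valid because $f$ is flat) identifies this with $\mathrm{Obs}(Y_A)\in\mathrm{Ext}^{2}(\mathbb{L}_{Y_0},\OO_{Y_0})$; the same recipe produces $\mathrm{Obs}(X_A)$.

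The key input is the behavior of the cotangent complex under the étale morphism $g:X_A\to Y_A$. Since $g$ is étale, $\mathbb{L}_{X_A/Y_A}\simeq 0$, and the transitivity triangle
\[ g^{*}\mathbb{L}_{Y_A/\C}\longrightarrow \mathbb{L}_{X_A/\C}\longrightarrow \mathbb{L}_{X_A/Y_A}\]
forces $g^{*}\mathbb{L}_{Y_A/\C}\simeq \mathbb{L}_{X_A/\C}$; the analogous argument with $\C$ replaced by $A$ gives $g^{*}\mathbb{L}_{Y_A/A}\simeq \mathbb{L}_{X_A/A}$. Applying $g^{*}$ term by term to the 2-extension that defines $\mathrm{Obs}(Y_A)$ therefore produces precisely the 2-extension that defines $\mathrm{Obs}(X_A)$, because the pullback of the relative cotangent triangle for $Y_A\to\mathrm{Spec}(A)$ is the relative cotangent triangle for $X_A\to\mathrm{Spec}(A)$ (the pullback of $f^{*}\mathbb{L}_{A/\C}$ is $(f\circ g)^{*}\mathbb{L}_{A/\C}$). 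Restricting to $X_0$ and using the natural pullback $g_0^{*}:\mathrm{Ext}^{2}(\mathbb{L}_{Y_0},\OO_{Y_0})\to \mathrm{Ext}^{2}(\mathbb{L}_{X_0},\OO_{X_0})$ gives $g_0^{*}(\mathrm{Obs}(Y_A))=\mathrm{Obs}(X_A)$, as desired.

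The main obstacle, and the reason I expect this cleaner reformulation to be worth writing down, is purely bookkeeping: one must verify that the splicing producing the obstruction is natural with respect to flat base change on the target $\mathrm{Spec}(A)$ and with respect to étale base change on the total space, and that the two base-change identifications $\mathbb{L}_{Y_A/A}\otimes^{\mathbb{L}}\OO_{Y_0}\simeq \mathbb{L}_{Y_0}$ and $g^{*}\mathbb{L}_{Y_A/A}\simeq \mathbb{L}_{X_A/A}$ are compatible with the pullback map on $\mathrm{Ext}^{2}$. Once these functorialities of $\mathbb{L}_{-/-}$ (which are standard in \cite{I}) are in hand, the statement is immediate from the two displayed quasi-isomorphisms above.
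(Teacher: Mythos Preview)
Your proposal is correct and follows essentially the same route as the paper: both arguments hinge on the vanishing $\mathbb{L}_{X_A/Y_A}\simeq 0$ for the \'etale map $g$, which identifies the transitivity triangle for $X_A$ with the pullback of that for $Y_A$, so that Illusie's obstruction construction is carried to itself under $g^*$. The only cosmetic difference is that the paper packages the obstruction via the triangle $f^*\mathbb{L}_{A/A'}\to\mathbb{L}_{Y_A/A'}\to\mathbb{L}_{Y_A/A}$ and an adjunction/hypercohomology square, whereas you splice the triangle over $\C$ with the extension class of the small extension; these are equivalent presentations of the same map.
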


\begin{proof}
Since $f$ and $g$ are flat, we have that
\begin{center}
    $(\mathbb{L}_{X_A/A})|_{X_0}=\mathbb{L}_{X_0}$ which implies $\Ext^1(\mathbb{L}_{X_A/A},\OO_{X_0})=\Ext^1(\mathbb{L}_{X_0},\OO_{X_0})$.
\end{center}
Let us recall how to construct obstructions $Obs(X_A)$ and $Obs(Y_A)$, see~\cite{I} for the details. Note that we have a distinguished triangle $\Delta_{Y_A}$ in $D_{coh}^-(Y_A)$ as follows.
\begin{equation} \label{eq1}
\xymatrix{f^*\mathbb{L}_{A/A'}\ar[rr] && \mathbb{L}_{Y_A/A'}\ar[dl]\\
&\mathbb{L}_{Y_A/A}\ar[ul]^{+1}}
\end{equation}
Similarly, we also have a distinguished triangle $\Delta_{X_A}$ in $D_{coh}^-(X_A)$. We apply the functors $\Ext^i(-,f^*(t))=\Ext^i(-,\OO_{Y_0})$ to the triangle $\Delta_Y$. We get a long exact sequence
\[\xymatrix{\ldots\ar[r]  &\Ext^1(f^*\mathbb{L}_{A/A'},\OO_{Y_0})\ar@{=}[d]\ar[r] &\Ext^2(\mathbb{L}_{Y_A/A},\OO_{Y_0})\ar@{=}[d]\ar[r] &\ldots\\
&\mathrm{Hom}(\OO_{Y_0},\OO_{Y_0})\ar[r]^h & \Ext^2(\mathbb{L}_{Y_0},\OO_{Y_0})}\]
where the first vertical identification follows from
\begin{equation}\label{eq2}
_{\tau\geq -1}f^*\mathbb{L}_{A/A'}=f^*(t)[1]=\OO_{Y_0}[1].
\end{equation}
 The obstruction is given by \[\Id_{\OO_{Y_0}}\mapsto Obs({Y_A})=h(\Id_{\OO_{Y_0}}).\]

Since $g$ is \'etale, we have $\mathbb{L}_{X_A/{Y_A}}=0$. It implies that $g^*\Delta_{Y_A}=\Delta_{X_A}$.
Therefore, we have a natural commutative diagram of distinguished triangles
\[\xymatrix{\mathbb{R}\underline{\mathrm{Hom}}_{Y_A}(\Delta_{Y_A},f^*(t))\ar@{=}[d]\ar[r] &\mathbb{R}\underline{\mathrm{Hom}}_{Y_A}(\Delta_{Y_A},g_*g^*f^*(t))\ar@{=}[d]\\
\mathbb{R}\underline{\mathrm{Hom}}_{Y_A}(\Delta_{Y_A},\OO_{Y_0})\ar[r] \ar[dr]^{\delta} & \mathbb{R}g_*\mathbb{R}\underline{\mathrm{Hom}}_{X_A}(g^*\Delta_{Y_A},g^*f^*(t))\ar@{=}[d] \\
&\mathbb{R}g_*\mathbb{R}\underline{\mathrm{Hom}}_{X_A}(\Delta_{X_A},\OO_{X_{0}})}\]
where the second vertical identification follows from~\cite[Proposition 5.10]{H2} and the row map is induced by the adjoint map $f^*(t)\rightarrow g_*g^*f^*(t)$. Note that
\begin{center}
    $\mathbb{H}^i(Y_A,\mathbb{R}\underline{\mathrm{Hom}}(\mathcal{F},\mathcal{G}))=\Ext^i(\F,\mathcal{G})$ \qquad and \qquad $\mathbb{H}^i(Y_A,\mathbb{R}g_*\F)=\mathbb{H}^i(X_A,\F)$.
\end{center}
Applying the hypercohomology functor $\mathbb{H}^*(Y,-)$ to the morphism $\delta$, we get the following commutative diagram.
\[\xymatrix{\mathrm{Hom}(\OO_{Y_0},\OO_{Y_0})\ar[d] \ar[r]^h & \Ext^2(\mathbb{L}_{Y_0},\OO_{Y_0})\ar[d]^{g^*} & 1 \ar@{|->}[r]^h\ar@{|->}[d] & Obs(Y_A)\ar@{|->}[d]^{g^*}\\
\mathrm{Hom}(\OO_{X_0},\OO_{X_0})\ar[r]^h & \Ext^2(\mathbb{L}_{X_0},\OO_{X_0}) & 1 \ar@{|->}[r]^h &Obs(X_A)}
\]
\end{proof}

For a pair $(Y_A,L_A)$, we can show Proposition~\ref{thm1} (3) similarly once we notice that the obstructions are constructed in a similar way. Let us indicate it briefly. Suppose that $$0\rightarrow\mathbb{C}\rightarrow A'\rightarrow A\rightarrow 0$$ is a small extension. For a pair $(Y,V)$ where $V$ is a vector bundle on $Y$ and a deformation pair $(Y_A,V_A)$, the obstruction element
\begin{center}
$Obs(Y_A,V_A)\in \Ext^2(\At(V),V)$
\end{center}
is constructed in the following way
\[\xymatrix{ V_A\otimes \mathbb{L}_{Y_A/A'} \ar[d] \ar[r] & \At'(V_A)\ar[d]\ar[r] &V_A \ar@{=}[d] \\
V_A\otimes \mathbb{L}_{Y_A/A} \ar[d] \ar[r] & \At(V_A) \ar@{-->}[dl] \ar[r] &V_A\\
V_A\otimes f^*\mathbb{L}_{A/A'}[1]}\]
where the first vertical arrows are given by tensoring (\ref{eq1}) with $V_A$, and $\At'(V_A)$ and $\At(V_A)$ are Atiyah extensions, see~\cite{I}. 
By (\ref{eq2}), we have an induced morphism $$\At(V_A)\rightarrow V_A\otimes f^*L_{A/A'}[1]$$ which is an element in 
$$\Ext^1(\At(V_A),V_A\otimes f^*\mathbb{L}_{A/A'})=\Ext^2(\At(V_A),V_A\otimes \OO_Y)=\Ext^2(\At(V),V).$$
This map gives rise to the obstruction element $Obs(Y_A,V_A)\in \Ext^2(\At(V),V)$.\\



\end{document}